\newtheorem{lemma}{Lemma}[section]
\newtheorem{prop}[lemma]{Proposition}
\newtheorem{thm}[lemma]{Theorem}
\newtheorem{cor}[lemma]{Corollary}
\newtheorem{lem}[lemma]{Lemma}
\newtheorem{rem}[lemma]{Remark}
\newtheorem*{special theorem}{My Specially-Named Theorem}
\newcommand{\Z} { {\mathbb Z} }
\newcommand{\Q} { {\mathbb Q} }
\newcommand{\Spec} { {\mathrm{Spec~}} }
\newcommand{\F} { {\mathbb F} }
\newcommand{\colim} { {\mathrm{colim}} }
\newcommand{\comment}[1]{}
\title{On some negative motivic homology groups}
\date{}
\author{Tohru Kohrita}
\address{Graduate School of Mathematics, Nagoya University, Nagoya, Japan}
\email{kohrita.tohru@j.mbox.nagoya-u.ac.jp}
\begin{document}
\maketitle

\begin{abstract}
For an arbitrary separated scheme $X$ of finite type over a finite field $\F_q$ and a negative integer $j,$ we prove under the assumption of resolution of singularities, that $H_{-1}(X,\Z(j))$ is canonically isomorphic to $H_{-1}(\pi_0(X),\Z(j))$ if $j=-1$ or $-2,$ and $H_i(X,\Z(j))$ vanishes if $i\leq-2$ and $i-j\leq1.$ As the group $H_{-1}(\pi_0(X),\Z(j))$ is explicitly known, this gives a explicit calculation of motivic homology of degree $-1$ and weight $-1$ or $-2$ of an arbitrary scheme over a finite field.
\end{abstract}

\section{Introduction}

In this paper, we assume that schemes are separated and of finite type over a perfect field. The finite field with $q$ elements is written as $\F_q.$ For a scheme $X,$ $\pi_0(X)$ denotes the spectrum of $\mathcal O_X(X)^{\acute et},$ the largest \'etale $k$-algebra contained in $\mathcal O_X(X)$ that is finite over $k.$ The properties of $\pi_0(X)$ relevant to us can be found in \cite[p.495-496]{Liu}.

The aim of this paper is to prove the following theorem on motivic homology.

\begin{thm}\label{thm:MAIN}
Assume that resolution of singularities holds over $\F_q.$ Let $i$ and $j$ be negative integers. Then, for all schemes $X$ over $\F_q,$ $$H_i(X,\Z(j))=0$$ if $i\leq-2$ and $i-j\leq1.$ In degree $i=-1,$ the canonical map
$$\alpha_X:H_{-1}(X,\Z(j))\longrightarrow H_{-1}(\pi_0(X),\Z(j))$$
is an isomorphism if $i-j\leq 1,$ i.e. $j=-1$ or $-2.$
\end{thm}

Since $\pi_0(X)$ is finite \'etale over $\F_q,$ it is a finite disjoint union of spectra of finite fields. Hence, the isomorphism $H_{-1}(X,\Z(j))\cong H_{-1}(\pi_0(X),\Z(j))$ of Theorem~\ref{thm:MAIN} and the explicit computation of negative motivic homology groups of finite fields as in Lemma~\ref{rem:calculation} give an explicit computation of $H_{-1}(X,\Z(j))$ ($j=-1$ or $-2$) for an arbitrary scheme $X$ over $\F_q.$ In particular, if $X$ is geometrically connected over $\F_q$ (this is equivalent to requiring that $X$ be connected and $\pi_0(X)=\Spec\F_q$ \cite[Chapter 10, Corollary 2.21(a)]{Liu}), we have the following corollary. 

\begin{cor}
Under resolution of singularities, if $X$ is a geometrically connected scheme over $\F_q$ and $j=-1$ or $-2,$ there is a canonical isomorphism
$$H_{-1}(X,\Z(j))\buildrel\sim\over\longrightarrow H_{-1}(\Spec \F_q,\Z(j))\cong \F_{q^{-j}}^\times.$$ 
\end{cor}

\begin{rem}
It is worth noting that if one assumes Parshin's conjecture, the statement in Theorem~\ref{thm:MAIN} holds for all negative integers $i$ and $j$ without the bound $i-j\leq 1.$ One only needs to invoke \cite[Proposition 4.1]{KY} instead of Proposition~\ref{thm:KYParshin} in order to prove the claims of Lemma~\ref{lem:4.1}, Proposition~\ref{prop:smcpt} and Proposition~\ref{prop:resolution} without the bounding conditions on $i-j.$ One may similarly prove Proposition~\ref{lem:smoothisom}, Lemma~\ref{lem:desingisom} and Lemma~\ref{lem:surj} for all negative integers $j.$
\end{rem}

Theorem~\ref{thm:MAIN} is a version in the context of motivic homology of the following theorem of Kondo and Yasuda on Borel-Moore motivic homology. In fact, if the scheme $X$ is proper, our Theorem~\ref{thm:MAIN} is due to Kondo and Yasuda:
 
\begin{thm}[{\cite[Theorem 1.1]{KY}}]\label{thm:KYmain}
Let $j=-1$ or $-2$ and let $X$ be a connected scheme over a finite field $\F_q.$ If $X$ is not proper,
$$H_{-1}^{BM}(X,\Z(j))=0.$$
If $X$ is proper, the pushforward maps 
$$H_{-1}^{BM}(X,\Z(j))\longrightarrow H_{-1}^{BM}(\Spec\mathcal O_X(X),\Z(j))$$
are isomorphisms. 
\end{thm}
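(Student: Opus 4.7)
The plan is a triple dévissage: reduce general $X$ to smooth via abstract blow-up Mayer--Vietoris, reduce smooth $X$ to smooth projective via a smooth compactification, and handle smooth projective $X$ via Theorem~\ref{thm:KYmain}. For the smooth projective base, $M(X)=M^c(X)$ yields $H_i(X,\Z(j))=H_i^{BM}(X,\Z(j))$; applying the proper case of Theorem~\ref{thm:KYmain} componentwise then identifies $H_{-1}^{BM}(X,\Z(j))$ with $H_{-1}^{BM}(\Spec\mathcal O_X(X),\Z(j))$. For smooth proper $X$ over $\F_q$, the ring $\mathcal O_X(X)$ is already a finite product of finite extensions of $\F_q$, so it equals $\mathcal O_X(X)^{\acute et}$ and this right-hand side is $H_{-1}(\pi_0(X),\Z(j))$. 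The vanishing $H_i^{BM}(X,\Z(j))=0$ for $i\leq-2$ in the range $i-j\leq 1$ is the smooth projective input the author isolates as Proposition~\ref{thm:KYParshin}, consistent with the explicit computation in Remark~\ref{rem:calculation}.

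For the smooth non-proper step, fix $X$ smooth of pure dimension $d$ and use resolution to embed $X$ as an open subscheme of a smooth projective $\bar X$ with reduced boundary $D=\bar X\setminus X$ of dimension at most $d-1$. Voevodsky's Poincaré duality gives $H_i(X,\Z(j))\cong H^{2d-i}_c(X,\Z(d-j))$, and similarly for $\bar X$, while the localization distinguished triangle $M^c(D)\to M^c(\bar X)\to M^c(X)\to M^c(D)[1]$ produces the long exact sequence
$$\cdots\to H^{2d-i-1}_c(D,\Z(d-j))\to H_i(X,\Z(j))\to H_i(\bar X,\Z(j))\to H^{2d-i}_c(D,\Z(d-j))\to\cdots.$$
A secondary induction on $\dim D$ via resolution of singularities of $D$, combined with the purely dimensional vanishing $CH^{d-j}(\tilde D,\ast)=0$ on any smooth model $\tilde D$ (valid because $d-j>\dim\tilde D$ for $j\leq-1$), kills both boundary terms, so $H_i(X,\Z(j))\cong H_i(\bar X,\Z(j))$ reduces to the smooth projective case. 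The $\pi_0$-comparison is tracked through the sequence via the map $\pi_0(X)\to\pi_0(\bar X)$.

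For the singular step, induct on $\dim X$; the base case $\dim X=0$ is immediate from $X_{\mathrm{red}}=\pi_0(X)$. In general, choose a resolution $f:X'\to X$ with $X'$ smooth and $f$ an isomorphism over a dense open $U\subset X$, and set $Z=(X\setminus U)_{\mathrm{red}}$ and $E=f^{-1}(Z)_{\mathrm{red}}$. The abstract blow-up distinguished triangle $M(E)\to M(Z)\oplus M(X')\to M(X)\to M(E)[1]$ from \cite[Ch.\,5, Thm.\,4.3.2]{VSF} produces a Mayer--Vietoris long exact sequence; since $\dim Z,\dim E<\dim X$, the inductive hypothesis treats those terms, while $X'$ has been handled by the smooth non-proper step. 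A compatible Mayer--Vietoris sequence for $\pi_0(-)$ on the target side completes the comparison.

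The main obstacle is the boundary analysis in the smooth non-proper step: the divisor $D$ is typically singular, and the boundary terms $H^*_c(D,\Z(d-j))$ must vanish within the prescribed bidegree range without invoking Parshin-type conjectural inputs. Exactly the constraint $i-j\leq 1$ --- which forces the weight $d-j$ to exceed $\dim\tilde D$ on any smooth model $\tilde D$ of $D$ --- renders the needed vanishing purely Chow-dimensional and hence unconditional; dropping this constraint would force one to replace Proposition~\ref{thm:KYParshin} by \cite[Proposition 4.1]{KY}, which is conditional on Parshin's conjecture, exactly matching the author's own comment about the range of validity.
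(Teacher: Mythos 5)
There is a genuine gap: your proposal does not prove the stated theorem, and in fact cannot, for two structural reasons. First, it is circular. The statement to be proved \emph{is} Theorem~\ref{thm:KYmain} (the Kondo--Yasuda theorem, which the paper only cites from \cite{KY} and never reproves), yet your smooth projective base case reads ``applying the proper case of Theorem~\ref{thm:KYmain} componentwise then identifies $H_{-1}^{BM}(X,\Z(j))$ with $H_{-1}^{BM}(\Spec\mathcal O_X(X),\Z(j))$.'' That is exactly the nontrivial content you are supposed to establish (already for smooth projective $X$ this is the hard part, going back to Akhtar's computation of zero-cycles and the Kondo--Yasuda arguments with \'etale cohomology and weights); everything else in your dévissage only reduces to it. Second, you have conflated motivic homology $H_{-1}(X,\Z(j))$ with Borel--Moore homology $H_{-1}^{BM}(X,\Z(j))$. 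Your compactification step proves $H_i(X,\Z(j))\cong H_i(\bar X,\Z(j))$, a statement about homology \emph{without} compact supports (it is essentially the paper's Proposition~\ref{prop:smcpt}, used for Theorem~\ref{thm:MAIN}); but the theorem in question asserts that for connected non-proper $X$ the Borel--Moore group \emph{vanishes}. These behave oppositely: for $X=\A^1$ one has $H_{-1}^{BM}(\A^1,\Z(-1))=0$ by the theorem, while $H_{-1}(\A^1,\Z(-1))\cong H_{-1}(\Spec\F_q,\Z(-1))\cong\F_q^\times\neq0$ by homotopy invariance and Remark~\ref{rem:calculation}. So an argument that identifies the group of a non-proper $X$ with that of its compactification proves the wrong statement for $H^{BM}$, and your proposal never addresses the required vanishing at all.

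Two further points. The cited theorem is unconditional, whereas your argument leans on resolution of singularities throughout (smooth compactifications, desingularized boundaries); Kondo--Yasuda avoid this precisely by working with higher Chow groups and their localization sequences, which is why the paper defines $H^{BM}$ via Bloch's complexes. And your claim that the boundary vanishing is ``purely Chow-dimensional'' is not correct in the edge case $i-j=1$ with $\dim\tilde D=d-1$: there $CH^{d-j}(\tilde D,i-2j)$ concerns cycles of dimension $0$ on $\tilde D\times\Delta^{i-2j}$ and does not vanish for dimension reasons (compare $CH^1(\Spec\F_q,1)=\F_q^\times$); this is exactly the case the paper's Lemma 2.4 and Proposition~\ref{thm:KYParshin} handle with the Geisser--Levine theorem, \'etale cohomological dimension and weight arguments, not dimension counting.
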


Theorem~\ref{thm:KYmain} itself is a generalization of an earlier result of Akhtar \cite[Proposition 3.1]{Ak}, where the claim for $j=-1$ was proved for smooth projective schemes $X.$ 

The case $i\leq-2$ of our Theorem~\ref{thm:MAIN} is also due to Kondo and Yasuda if the scheme $X$ is proper (see Proposition~\ref{thm:KYParshin}).

Theorem~\ref{thm:MAIN} and Theorem~\ref{thm:KYmain} are related as follows. If $X$ is a scheme over $\F_q,$ $\Spec\mathcal O_X(X)$ is also a scheme over $\F_q.$ (Recall our convention on schemes.) Thus, the canonical factorization
$$X\longrightarrow \Spec\mathcal O_X(X)\longrightarrow \pi_0(X)\longrightarrow\Spec\F_q$$
of the structure morphism of $X$ gives, on applying Theorem~\ref{thm:MAIN}, the isomorphisms
$$H_i(X,\Z(j))\buildrel\sim\over\longrightarrow H_i(\Spec\mathcal O_X(X),\Z(j))\buildrel\sim\over\longrightarrow H_i(\pi_0(X),\Z(j))$$
because $\pi_0(\Spec\mathcal O_X(X))=\pi_0(X)$ by definition. It is trivially true that the first map is an isomorphism if $X$ is affine and so is the second if $X$ is proper. Since motivic homology and Borel-Moore homology agree for proper schemes, the theorem of Kondo and Yasuda says that the first map is an isomorphism when $X$ is proper. They proved this without assuming resolution of singularities. Our Theorem~\ref{thm:MAIN} claims that both maps are always isomorphisms if we assume the existence of resolution of singularities.

Let us end this introduction with a summary of the properties of motivic homology and cohomology theories which we shall use freely in the subsequent sections. 

By motivic (co)homology (with compact supports) with coefficients in an abelian group $A,$ we mean the following four theories defined for schemes $X$ over a perfect field $k$:

\begin{itemize}
\item motivic homology, 
$$H_i(X,A(j)):=Hom_{DM_{Nis}^{-}(k)}(A(j)[i],M(X));$$
\item motivic cohomology, 
$$H^i(X,A(j)):=Hom_{DM_{Nis}^{-}(k)}(M(X),A(j)[i]); $$
\item motivic homology with compact supports,
$$H_i^{BM}(X,A(j)):=CH_j(X,i-2j;A);$$ 
\item motivic cohomology with compact supports, 
$$H_c^i(X,A(j)):=Hom_{DM_{Nis}^{-}(k)}(M^c(X),A(j)[i]).$$
\end{itemize}
Here $DM_{Nis}^-(k)$ is Voevodsky's triangulated category of motives \cite[Chapter 5]{VSF} and $CH_j(X,i-2j;A)$ is Bloch's higher Chow group \cite{Bloch}. We refer to motivic homology with compact supports as Borel-Moore homology. We index higher Chow groups ``homologically" by dimension of cycles contrary to the more common indexing by codimension of cycles. With this indexing $CH_r(X,s;A)$ is a subquotient of the group of cycles of dimension $r+s$ in $X\times\Delta^s$ that intersect properly with all faces. The advantage of this convention is that we do not need to require $X$ to be equi-dimensional. If $X$ is equi-dimensional, we have $CH_r(X,s;A)=CH^{\dim X-r}(X,s;A).$

There is a canonical isomorphism (\cite[Chapter 5, Proposition 4.2.9]{VSF}, \cite[Proposition 19.18]{MVW})
$$Hom_{DM_{Nis}^{-}(k)}(\Z(j)[i],M^c(X))\buildrel\sim\over\longrightarrow CH_j(X,i-2j)$$
if $X$ is quasi-projective and $k$ admits resolution of singularities in the sense of \cite[Chapter 4, Definition 3.4]{VSF} . We chose to define Borel-Moore homology by Bloch's higher Chow groups mainly because they have localization sequences without assuming resolution of singularities. For motivic cohomology, it is known that if $X$ is smooth and of pure dimension $d$, there is a canonical isomorphism \cite[Theorem 19.1]{MVW}:
$$H^i(X,A(j))\buildrel\sim\over\longrightarrow CH_{d-j}(X,2j-i;A)\buildrel def\over =H_{2d-i}^{BM}(X,A(d-j)).$$ The theories with and without compact supports agree for proper schemes as the canonical morphism $M(X)\to M^c(X)$ becomes the identity. Moreover, if $X$ is a smooth scheme of pure dimension $d$ and $k$ admits resolution of singularities, there is a canonical isomorphism \cite[Chapter 5, Theorem 4.3.7(3)]{VSF}: 
$$H_i(X,\Z(j))\cong H_c^{2d-i}(X,\Z(d-j)).$$

{\it Acknowledgements.} We would like to thank the referee whose valuable suggestions simplified and clarified many arguments and also improved the presentation of the paper as a whole. This paper is based on the author's master's thesis. The author would like to express his gratitude to his advisor Thomas Geisser, who suggested the topic of this paper. He would also like to thank Lars Hesselholt and Rin Sugiyama for their numerous helpful suggestions and comments in the course of writing his thesis.

\section{Borel-Moore homology}
In this preliminary section, we review some results on Borel-Moore homology groups. Let us begin with an explicit computation of all negative motivic homology groups of a finite field following \cite[Remark 2.6]{KY}.

\begin{lem}\label{rem:calculation}
Let $i$ and $j$ be negative integers. Then,
\begin{displaymath}
H_i(\Spec \F_{q},\Z(j))=\left\{\begin{array}{ll}
\F_{q^{-j}}^\times & \textrm{if $i=-1,$}\\
0 & \textrm{otherwise.}
\end{array}\right.
\end{displaymath}
\end{lem}

\begin{proof}
For $i<j\leq-1,$ $H_i(\Spec\F_{q},\Z(j))\cong CH_j(\Spec\F_{q},i-2j)=0$ for dimension reasons. For $j\leq i,$ consider the long exact sequence
$$\cdots\to H^{-i-1}(\Spec\F_{q},\Q(-j))\to H^{-i-1}(\Spec\F_{q},\Q/\Z(-j))$$
$$\to H^{-i}(\Spec\F_{q},\Z(-j))\to H^{-i}(\Spec\F_{q},\Q(-j))\to\cdots.$$
The first and the last terms vanish because 
$$H^{t}(\Spec\F_{q},\Q(-j))=CH_j(\F_{q},-t-2j)_\Q\hookrightarrow K_{-t-2j}(\F_{q})_\Q=0$$
if $-t-2j\neq0.$ The embedding follows from Bloch's Riemann-Roch Theorem~\cite[Theorem 9.1]{Bloch} and the last equality by Quillen's calculation of $K$-groups of finite fields~\cite[Theorem 8]{Quillen}. Now, because we are in the range $-i-1<-j,$ we may apply \cite[Theorem 8.4]{GLcharp} and \cite[Corollary 1.2]{GLblochkato} whose hypotheses are satisfied by the theorem of Rost and Voevodsky to obtain
\begin{eqnarray}
H_{i}(\Spec\F_{q},\Z(j)) &\cong& H^{-i}(\Spec\F_{q},\Z(-j))\nonumber\\
& \cong & H^{-i-1}(\Spec\F_{q},\Q/\Z(-j))\nonumber\\
& \cong & \bigoplus_l H^{-i-1}(\Spec\F_{q},\Q_l/\Z_l(-j))\nonumber\\
& \cong & \bigoplus_{l\neq p} H_{\acute et}^{-i-1}(\Spec\F_{q},\Q_l/\Z_l(-j)).\nonumber
\end{eqnarray} 

Hence, if $i\leq-3,$ $$H_i(\Spec \F_{q},\Z(j))\cong\bigoplus_{l\neq p} H_{\acute et}^{-i-1}(\Spec\F_{q},\Q_l/\Z_l(-j))=0$$ for $\F_{q}$ has cohomological dimension $1$.

If $i=-1,$
\begin{eqnarray}
H_{-1}(\Spec\F_{q},\Z(j))&\cong&\bigoplus_{l\neq p} H_{\acute et}^0(\Spec\F_{q},\Q_l/\Z_l(-j))\nonumber\\
&\cong&\bigoplus_{l\neq p}\Z/l^{r_l}\cong \Z/(q^{-j}-1)\cong\F_{q^{-j}}^\times,\nonumber
\end{eqnarray}
where $r_l$ is the number such that $q^{-j}-1=\prod_l l^{r_l}.$ 

Finally, for $i=-2,$  we need to show that the group $H_{\acute et}^1(\Spec\F_{q},\Q_l/\Z_l(-j))$ vanishes for an arbitrary prime $l\neq p.$ Since this is a Galois cohomology group of a finite field with torsion coefficients, there is an exact sequence
$$0\to H_{\acute et}^0(\Spec\F_{q},\Q_l/\Z_l(-j))\to \Q_l/\Z_l(-j)\buildrel id-Frob\over\longrightarrow  \Q_l/\Z_l(-j)$$
$$\to H_{\acute et}^1(\Spec\F_{q},\Q_l/\Z_l(-j))\to0.$$ Since $\Q_l/\Z_l(-j)$ is a divisible group, one can easily see that the homomorphism $id-Frob$ is either zero or surjective. As we have seen above, the first term of this exact sequence is a finite group. Thus, $id-Frob$ is not the zero map, so it must be surjective. This shows the vanishing of $H_{\acute et}^1(\Spec\F_{q},\Q_l/\Z_l(-j)).$
\end{proof}

\begin{rem}\label{rem:basefield}
Since motivic homology is defined for schemes of finite type over some base field, for Lemma~\ref{rem:calculation} to make sense, we need to specify the base field of $\Spec{\F_q}.$ However, as the proof shows, the lemma holds for any choice of the base field. More generally, see Lemma~\ref{lem:independence}.
\end{rem}

Let us further evaluate other relevant motivic invariants for later use.

\begin{lem}\label{lem:preliminary}
{\rm (i)} $H^i(\Spec\F_q,\Q(j))=0$ unless $(i,j)=(0,0).$

{\rm (ii)} If $K$ is a finitely generated field of transcendence degree 1 over $\F_q,$ then $H^i(\Spec K,\Q(j))=0$ unless $(i,j)=(0,0)$ or $(1,1).$
\end{lem}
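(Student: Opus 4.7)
For both parts, my strategy is to relate motivic cohomology to algebraic $K$-theory via the Bloch--Adams decomposition
$$K_n(X)_\Q \;\cong\; \bigoplus_j CH^j(X, n)_\Q \;\cong\; \bigoplus_j H^{2j-n}(X, \Q(j))$$
for smooth $X$, and then invoke the relevant $K$-theoretic input.

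For (i), Quillen's computation of the $K$-theory of finite fields gives $K_0(\F_q)_\Q = \Q$ (in weight $0$) and $K_n(\F_q)_\Q = 0$ for $n \geq 1$. Feeding this into the decomposition above, together with the vanishing of motivic cohomology of a smooth scheme in negative weight, produces $H^i(\Spec\F_q, \Q(j)) = 0$ except at $(i,j) = (0,0)$.

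For (ii), I would write $\Spec k$ as a filtered limit of smooth open subschemes $U$ of a smooth projective model $C$ of $k$ over $\F_q$, so that by continuity $K_n(k) = \colim_U K_n(U)$ and $H^i(\Spec k, \Q(j)) = \colim_U H^i(U, \Q(j))$. For each $U$ with complement $S = C \setminus U$ (a finite disjoint union of spectra of finite extensions of $\F_q$), Quillen's localization sequence
$$\cdots \to K_n(S) \to K_n(C) \to K_n(U) \to K_{n-1}(S) \to \cdots$$
is available. Since $K_n(S)_\Q = 0$ for $n \geq 1$ by Quillen and $K_n(C)_\Q = 0$ for $n \geq 1$ by Parshin's conjecture for smooth projective curves over finite fields (classical, via Harder), one obtains $K_n(U)_\Q = 0$ for all $n \geq 2$, and hence $K_n(k)_\Q = 0$ for $n \geq 2$.

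Finally, $K_0(k)_\Q = \Q$ sits purely in weight $0$ and $K_1(k)_\Q = k^\times \otimes \Q$ purely in weight $1$. The Bloch--Adams decomposition therefore forces the only possibly nonvanishing bidegrees to be $(0,0)$ (giving $\Q$) and $(1,1)$ (giving $k^\times \otimes \Q$), which is the content of (ii). The main obstacle is the arithmetic input of Parshin's conjecture for curves over finite fields --- the one nonformal ingredient --- but this is classical and does not rely on any of the deeper conjectures needed elsewhere in the paper.
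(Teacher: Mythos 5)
Your part (i) is essentially the paper's argument: both use Bloch's formula (the Adams decomposition of rational $K$-theory) plus Quillen's computation of $K_*(\F_q)$, together with the trivial dimension vanishings $H^i(\Spec\F_q,\Q(j))=0$ for $j<0$ or $i>j$. You mention the negative-weight vanishing but not the $i>j$ vanishing explicitly; both are needed since the Adams decomposition only speaks to the range $n=2j-i\geq 0$. This is a cosmetic omission, not a gap.

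For part (ii) you take a genuinely different route. The paper stays inside motivic cohomology: it reduces to $k=k(X)$ for $X$ a smooth projective curve (via de Jong and transfer, though normalization would already suffice over a perfect base), runs the coniveau spectral sequence, observes that the $E_1^{1,*}$ column is controlled by part (i), and then identifies $H^i(k(X),\Q(j))$ with $H^i(X,\Q(j))$, which embeds into $K_{2j-i}(X)_\Q$ and vanishes by Harder. You instead compute $K_*(k)_\Q$ directly: Harder for the projective curve $C$, Quillen's localization sequence to get $K_n(U)_\Q=0$ for $n\geq 2$ on affine opens $U\subset C$, continuity to pass to $k$, and finally the Adams/weight decomposition together with the facts that $K_0(k)_\Q$ is pure weight $0$ and $K_1(k)_\Q=k^\times_\Q$ is pure weight $1$. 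The two arguments use the same nonformal arithmetic input (Harder, Quillen), but yours trades the coniveau spectral sequence for Quillen localization and an appeal to the weight purity of $K_0$ and $K_1$ of a field. The paper's route is slightly more self-contained in that it never leaves motivic cohomology until the final appeal to Harder; yours is a bit shorter once one grants the standard weight facts for $K_0$ and $K_1$. Both are correct.
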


\begin{proof}
{\rm (i)} For dimension reasons, the cohomology group in question vanishes unless $0\leq j$ and $i\leq j.$ By Bloch's Riemann-Roch Theorem \cite[Theorem 9.1]{Bloch}, there is an embedding $H^i(\Spec\F_q,\Q(j))\hookrightarrow K_{-i+2j}(\F_q)_\Q.$ Since the positive degree $K$-groups of a finite field are torsion \cite[Theorem 8]{Quillen}, this implies $H^i(\Spec\F_q,\Q(j))=0$ when $-i+2j\geq1.$ Hence, the group $H^i(\Spec\F_q,\Q(j))$ vanishes unless $0\leq j,$ $i\leq j$ and $-i+2j\leq0,$ i.e. $(i,j)=(0,0).$

{\rm (ii)} Let us first note that, for dimension reasons, the cohomology group in question vanishes unless $i\leq j$ and $j\geq0.$

Let $X$ be a smooth projective curve over $\F_q$ with function field $K.$ For a nonempty open subscheme $U$ of $X,$ the localization sequence for higher Chow groups yields an exact sequence (because $X,$ $U$ and $X\setminus U$ are all smooth)
$$H^{i-2}(X\setminus U,\Q(j-1))\longrightarrow H^i(X,\Q(j))\longrightarrow H^i(U,\Q(j))\longrightarrow H^{i-1}(X\setminus U,\Q(j-1)).$$
If $j\neq1$ or $i<j=1,$ the above two cohomology groups of $X\setminus U$ vanish by (i). Thus we obtain an isomorphism 
$$H^i(X,\Q(j))\buildrel\sim\over\longrightarrow H^i(U,\Q(j)).$$
Taking the colimit over nonempty open subschemes $U$ of $X$ and applying \cite[Lemma 3.9]{MVW}, we obtain an isomorphism
\begin{equation}
H^i(X,\Q(j))\buildrel\sim\over\longrightarrow H^i(\Spec K,\Q(j)).
\end{equation}
By Bloch's Riemann-Roch Theorem, $H^i(X,\Q(j))$ is a subgroup of $K_{-i+2j}(X)_\Q,$ and this group vanishes if $-i+2j>0$ by Harder's theorem \cite{Harder}. This means that $H^i(\Spec K,\Q(j))=0$ unless $(i,j)=(0,0)$ or $(1,1).$
\end{proof}

\begin{lem}\label{lem:rationalvanishing}
Let $X$ be a smooth curve over $\F_q.$ Then $H^i(X,\Q(j))=0$ unless $(i,j)=(0,0), (1,1)$ or $(2,1).$
\end{lem}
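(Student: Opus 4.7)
The plan is to imitate the coniveau spectral sequence argument used in the proof of Lemma~\ref{lem:preliminary}(ii), but now applied to the smooth curve $X$ itself rather than to its generic point. Specifically, I would use
$$E_1^{s,t} = \bigoplus_{x \in X^{(s)}} H^{t-s}(\Spec k(x), \Q(j-s)) \Longrightarrow H^{s+t}(X, \Q(j)).$$
Since $\dim X = 1$, the rows with $s \geq 2$ are empty, so only $s = 0, 1$ need to be considered.

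First I would examine the $s = 0$ row: the residue field is the function field $k(X)$, which has transcendence degree $1$ over $\F_q$, so Lemma~\ref{lem:preliminary}(ii) forces $E_1^{0,t}$ to vanish unless $(t, j) = (0, 0)$ or $(1, 1)$. For the $s = 1$ row, each closed point has residue field a finite extension of $\F_q$, and Lemma~\ref{lem:preliminary}(i) forces $E_1^{1,t}$ to vanish unless $(t - 1, j - 1) = (0, 0)$, i.e.\ $(t, j) = (1, 1)$.

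From this point on the argument is bookkeeping on a two-row $E_1$-page. When $j \notin \{0, 1\}$ the entire $E_1$-page vanishes, hence so does $H^i(X, \Q(j))$ for every $i$. When $j = 0$ only $E_1^{0,0}$ can be nonzero, contributing at most to $H^0(X, \Q(0))$. When $j = 1$ only $E_1^{0,1}$ and $E_1^{1,1}$ can be nonzero, contributing respectively to total degrees $1$ and $2$, hence to $H^1(X, \Q(1))$ and $H^2(X, \Q(1))$. The resulting set of potentially non-vanishing pairs is then contained in the list $(0,0), (1,0), (1,1), (2,1)$ claimed in the statement.

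I do not anticipate any real obstacle here; the substantive computations have already been absorbed into Lemma~\ref{lem:preliminary}, and the rest is the vanishing analysis of a sparse spectral sequence. The only point to watch is the existence and precise form of the coniveau spectral sequence for motivic cohomology of a smooth variety, which is standard in the work of Bloch and Geisser.
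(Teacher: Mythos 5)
Your proposal is correct and follows essentially the same approach as the paper: the coniveau spectral sequence for the smooth curve $X$, with the two relevant rows controlled by Lemma~\ref{lem:preliminary} (part (ii) for the generic points, part (i) for the closed points), followed by the vanishing bookkeeping. Your bookkeeping is in fact slightly sharper than the paper's (it shows that $(1,0)$ never contributes, whereas the paper only establishes the containment in the stated list), but both arguments prove the lemma.
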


\begin{proof}
Let $K$ be the function field of $X.$ By the same argument to construct the isomorphism (1) in the proof of Lemma ~\ref{lem:preliminary} (ii), we obtain for an arbitrary smooth curve $X,$ a canonical isomorphism
$$H^i(X,\Q(j))\buildrel\sim\over\longrightarrow H^i(\Spec K,\Q(j))$$
if $j\neq1$ or $j=1$ but $i\neq1,2.$

Thus, by Lemma~\ref{lem:preliminary} (ii), $H^i(X,\Q(j))=0$ when $j\neq1$ unless $(i,j)=(0,0),$ and $H^i(X,\Q(j))$ also vanishes when $j=1$ and $i\neq 1,2.$ Hence, the lemma follows.
\end{proof}

The next two lemmas are special cases of \cite[Proposition 4.1, Lemma 4.2]{KY}. In that paper, a more general claim is proved under the assumption of Parshin's conjecture. The lemmas below are the part where Parshin's conjecture is not necessary. We include their proofs for the convenience of the reader.

\begin{lem}[{\cite[Lemma 4.2]{KY}}]\label{lem:colimit}
Let $X$ be an irreducible scheme of dimension $d\geq1$ over $\F_q.$ Then, for $i,j\leq-1$ with $i-j\leq1,$ 
$$\colim_U H_i^{BM}(U,\Z(j))=0,$$
where $U$ runs through the set of all nonempty open subschemes of $X.$
\end{lem}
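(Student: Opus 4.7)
The plan is to identify the colimit with motivic cohomology of the generic point of $X$ and then deduce vanishing from the bound on $i-j$.

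\textbf{Reduction to the generic point.} Higher Chow groups are insensitive to nilpotent thickenings, so we may assume $X$ reduced. The irreducible components of the connected equidimensional reduced scheme $X$ intersect pairwise in dimension $<d$, and removing these intersections decomposes $X$ as a disjoint union of its components; since the resulting opens are cofinal and Borel-Moore homology splits over disjoint unions, we reduce to $X$ irreducible. Since $\F_q$ is perfect, the smooth locus $X^{\mathrm{sm}} \subset X$ is open and dense, so open subschemes contained in $X^{\mathrm{sm}}$ are cofinal in the directed system of nonempty opens of $X$. For smooth $U$ of pure dimension $d$, Poincar\'e duality for higher Chow groups gives
\[
H_i^{BM}(U,\Z(j)) = CH^{d-j}(U, i-2j) = H^{2d-i}(U, \Z(d-j)),
\]
and since higher Chow groups commute with filtered inverse limits of schemes,
\[
\colim_U H_i^{BM}(U,\Z(j)) \cong H^{2d-i}(\Spec k(X), \Z(d-j)),
\]
where $k(X)$ is the function field of $X$.

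\textbf{Vanishing.} Writing $a := 2d-i$ and $b := d-j$, the hypotheses give $a - b = d - (i-j) \geq d - 1$ and $b \geq d + 1 \geq 2$. If $a > b$, then $H^a(\Spec k(X), \Z(b)) = CH^b(\Spec k(X), 2b-a)$ vanishes for elementary dimension reasons: $\Delta^{2b-a}_{k(X)}$ has dimension $2b-a < b$ and admits no codimension $b$ cycles. This disposes of every case except $d = 1$ with $i - j = 1$, where $a = b = 1-j \geq 3$ and $H^a(\Spec k(X), \Z(a))$ is the Milnor K-group $K^M_a(k(X))$ of the function field of a curve over $\F_q$ (Nesterenko--Suslin, Totaro). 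By Lemma~\ref{lem:preliminary}(ii), $K^M_a(k(X)) \otimes \Q = 0$. The Bloch-Kato theorem (Rost--Voevodsky) identifies $K^M_a(k(X))/\ell$ with $H^a_{\mathrm{et}}(k(X), \mu_\ell^{\otimes a})$, which vanishes for $\ell \neq p$ because $\mathrm{cd}_\ell k(X) = 2 < a$; Bloch-Kato-Gabber identifies $K^M_a(k(X))/p$ with a subgroup of $\Omega^a_{k(X)}$, which is zero since $k(X)$ has transcendence degree $1$ over the perfect field $\F_q$ (so $\Omega^1_{k(X)}$ is one-dimensional and $\Omega^a = 0$ for $a \geq 2$). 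Thus $K^M_a(k(X))$ is uniquely divisible, and being rationally zero it vanishes.

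\textbf{Main obstacle.} The principal delicate point is the boundary case $d = 1$, $i - j = 1$, where dimension counting alone does not suffice and one must combine Lemma~\ref{lem:preliminary}(ii) with Bloch-Kato-type machinery to obtain the integral vanishing of Milnor K-theory of the function field of a curve over $\F_q$.
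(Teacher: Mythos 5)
Your reduction to the generic point (identifying $\colim_U H_i^{BM}(U,\Z(j))$ with $H^{2d-i}(\Spec k(X),\Z(d-j))$ and then with $K^M_a(k(X))$ in the boundary case $a=b$) is a valid and somewhat slicker packaging than the paper's, which instead keeps an affine smooth open curve $U$ in play and uses the $\Q\to\Q/\Z\to\Z[1]$ triangle on $U$. Your computations that $K^M_a(k(X))\otimes\Q=0$ (by Lemma~\ref{lem:preliminary}(ii)) and that $K^M_a(k(X))/\ell=0$ for every prime $\ell$ (by Bloch--Kato for $\ell\neq p$ using $\mathrm{cd}_\ell k(X)=2<a$, and by Bloch--Kato--Gabber for $\ell=p$ using $\Omega^a_{k(X)}=0$) are both correct.

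The gap is the last sentence. You have shown that $K^M_a(k(X))$ is \emph{divisible} (all quotients $K^M_a/\ell$ vanish) and that it is \emph{rationally trivial} ($K^M_a\otimes\Q=0$, i.e. it is torsion). These two facts together do \emph{not} imply the group vanishes: a torsion divisible group such as $\Q_\ell/\Z_\ell$ satisfies both. To upgrade ``divisible'' to ``uniquely divisible'' you would also need $K^M_a(k(X))[\ell]=0$ for every $\ell$, which you never establish. The universal coefficient sequence shows $K^M_a(k(X))[\ell]$ is a quotient of $H^{a-1}(\Spec k(X),\Z/\ell(a))\cong H^{a-1}_{\acute et}(k(X),\mu_\ell^{\otimes a})$ for $\ell\neq p$; this vanishes by cohomological dimension when $a\geq 4$, but for the boundary case $a=3$ (i.e. $(i,j)=(-1,-2)$) the group $H^2_{\acute et}(k(X),\mu_\ell^{\otimes 3})$ sits in the top degree and is not killed by $\mathrm{cd}$ alone. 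That is precisely the case the paper treats with a weight argument (via Hochschild--Serre and the fact that $H^1((U\otimes\bar\F_q)_{\acute et},\Q_\ell(3))$ has nonzero weights). The clean fix in your setting is to not pass through $K^M_a/\ell$ at all: use the coefficient triangle to deduce from the rational vanishing that $K^M_a(k(X))\cong H^{a-1}(\Spec k(X),\Q/\Z(a))$, observe the $p$-primary part vanishes by Geisser--Levine for degree reasons and the $\ell\neq p$ part is $H^{a-1}_{\acute et}(k(X),\Q_\ell/\Z_\ell(a))$, and then for $a=3$ prove $H^2_{\acute et}(k(X),\Q_\ell/\Z_\ell(3))=0$ by the weight argument applied to a cofinal system of affine opens $U\subset X$.
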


\begin{proof}
Let $K$ denote the function field of $X.$ By definition, we have
$$\colim_U H_i^{BM}(U,\Z(j))= \colim_U CH^{d-j}(U,i-2j)\cong CH^{d-j}(\Spec K,i-2j).$$
Hence, for dimension reasons, $\colim_U H_i^{BM}(U,\Z(j))=0$ if $d-j>i-2j,$ i.e. $d>i-j.$

It remains to prove that $CH^{d-j}(\Spec K,i-2j)=0$ for $d\leq i-j,$ i.e. when $d=1.$ In this case, observe that we have the equality $i=j+1$ and the inequality $j=i-1\leq-2.$ Hence,
$$CH^{1-j}(\Spec K, i-2j)=CH^{1-j}(\Spec K,1-j)\cong K_{1-j}^M(K),$$
but the last group vanishes by Bass and Tate's calculation of Milnor $K$-groups of degree $\geq3$ of a global field \cite[II, Theorem 2.1 (3)]{BT}.  
\end{proof}

\begin{prop}[{\cite[Proposition 4.1]{KY}}]\label{thm:KYParshin}
Let $X$ be a scheme over $\F_q.$ Then, we have $H_i^{BM}(X,\Z(j))=0$ if $i\leq -2,$ $j\leq-1$ and $i-j\leq1.$
\end{prop}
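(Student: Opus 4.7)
The plan is Noetherian induction on $d:=\dim X$, combining the localization sequence for Bloch's higher Chow groups with the colimit-vanishing statement established in the previous lemma. The base case $d=0$ is immediate: $X$ is then a finite disjoint union of spectra of finite extensions of $\F_q$, each smooth and proper of dimension zero, so Borel-Moore homology agrees with motivic homology and the \'etale-cohomological computation recalled in Remark~\ref{rem:calculation} gives the required vanishing for $i\leq-2$.

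For the inductive step, I would first split $X$ into its connected components (which distributes over $H_i^{BM}$) to reduce to $X$ connected of dimension $d\geq 1$. Next I would dispose of the low-dimensional irreducible components: letting $Y\subset X$ denote the reduced closed subscheme given by the union of irreducible components of $X$ of dimension $<d$ and $V:=X\setminus Y$, the scheme $V$ is pure of dimension $d$ (possibly disconnected) and $\dim Y<d$. The localization sequence
$$H_i^{BM}(Y,\Z(j))\longrightarrow H_i^{BM}(X,\Z(j))\longrightarrow H_i^{BM}(V,\Z(j))$$
together with the inductive hypothesis applied to $Y$ reduces the problem to $V$; decomposing $V$ into its connected components (each pure of dimension $d$, since $V$ is) further reduces to the case $X$ connected and pure of dimension $d\geq 1$.

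In this remaining case, take any class $\alpha\in H_i^{BM}(X,\Z(j))$. By the previous lemma, there exists a nonempty open $U\subset X$ on which $\alpha$ restricts to zero. Setting $Z=X\setminus U$ with the reduced structure (so $\dim Z<d$), the localization sequence
$$H_i^{BM}(Z,\Z(j))\longrightarrow H_i^{BM}(X,\Z(j))\longrightarrow H_i^{BM}(U,\Z(j))$$
lifts $\alpha$ to a class in $H_i^{BM}(Z,\Z(j))$, which vanishes by the inductive hypothesis applied at the same bidegree $(i,j)$ but at smaller dimension. Hence $\alpha=0$, completing the induction. The real work has already been done in the previous lemma; the main thing to watch here is the reduction to $X$ connected and pure of dimension $d$, which is exactly what is needed to invoke that lemma's hypotheses, and the observation that one never needs to apply the inductive hypothesis at a shifted bidegree, so no extra arithmetic constraints beyond $i\leq-2$, $j\leq-1$, $i-j\leq 1$ are introduced along the way.
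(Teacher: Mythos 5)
Your proposal follows the same strategy as the paper's proof: induction on $\dim X$, with the base case handled by the explicit computation for finite fields and the inductive step driven by the localization sequence together with the colimit-vanishing lemma. The one place you genuinely diverge is in being more explicit about the preparatory reductions (splitting off connected components, then separating out the lower-dimensional irreducible components by a localization sequence to get to the pure-dimensional connected case), whereas the paper simply takes the colimit over all reduced closed $Y\neq X_{\mathrm{red}}$ of a general $X$ and appeals to the previous lemma and the induction hypothesis directly. Since the previous lemma is stated only for connected pure-dimensional schemes, your version is cleaner; the paper tacitly leaves those reductions to the reader, and the way it is written its ``first term is zero by induction hypothesis'' is only literally correct once one has arranged for $Y$ to have dimension $<d$. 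Your class-by-class phrasing (find $U$ killing $\alpha$, then lift $\alpha$ to the complement) is equivalent to the paper's colimit-exactness phrasing. Two small caveats worth flagging. First, your base case cites Remark~1.2, which is stated under the running hypothesis of resolution of singularities; this is harmless here, since for $\Spec \F_{q^n}$ the identification $H_i^{BM}\cong H^{-i}$ is just properness plus the duality for smooth schemes of \cite[Theorem 19.1]{MVW} and needs no resolution, but it would be cleaner to spell out the unconditional argument as the paper does. Second, in the pure-dimensional connected (but possibly reducible) case, to conclude $\dim Z<d$ for $Z=X\setminus U$ you need the open $U$ supplied by the lemma to be \emph{dense}; if $U$ only met some of the top-dimensional components, $Z$ would still have dimension $d$ and the induction hypothesis would not apply. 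This is exactly the same subtlety that is implicit in the lemma itself (whose proof passes to the function field and so effectively treats dense opens), and it is shared with the paper's argument, so it is not a defect of your proposal specifically, but it is the one spot where a fully rigorous write-up would want to reduce further to $X$ irreducible or restrict to the filtered system of dense opens.
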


\begin{proof}
We prove this by induction on the dimension of $X.$ We may suppose that $X$ is a reduced scheme because the Borel-Moore homology groups of a scheme and its reduction are the same. Thus, when $\dim X=0,$ it is enough to show the claim for $\Spec\F_{q^n}.$ This case was treated in Lemma~\ref{rem:calculation} and Remark~\ref{rem:basefield}.

Suppose, now, that the proposition is true for dimension $\leq d-1.$ Let us first prove the claim for an {\it irreducible} scheme $X$ of dimension $d.$ The localization sequence for Borel-Moore homology gives the exact sequence:
$$\colim_Y H_i^{BM}(Y,\Z(j))\longrightarrow H_i^{BM}(X,\Z(j))\longrightarrow \colim_Y H_i^{BM}(X\setminus Y,\Z(j)),$$
where $Y$ runs through the set of reduced closed subschemes of $X$ whose underlying sets are proper subsets of that of $X.$ Since the first term is zero by the induction hypothesis and the last vanishes by Lemma~\ref{lem:colimit}, we obtain $H_i^{BM}(X,\Z(j))=0.$

Now, for a general $X,$ consider the abstract blow-up
\begin{displaymath}
\xymatrix{ Z' \ar[r]^{i'} \ar[d]_{f'} & \coprod_n X_n \ar[d]^f \\
 Z \ar[r]_i & X,}
\end{displaymath}
where the $X_n$ are the irreducible components of $X$ and $Z$ is the reduced closed subscheme of $X$ where $f$ is not an isomorphism. This gives rise to an exact sequence
$$H_i^{BM}(Z,\Z(j))\oplus\bigoplus_n H_i^{BM}(X_n,\Z(j))\buildrel i_*-f_*\over\longrightarrow H_i^{BM}(X,\Z(j))\buildrel\delta\over\longrightarrow H_{i-1}^{BM}(Z',\Z(j)),$$
where $\delta$ denotes the connecting map. Hence, the vanishing of $H_i^{BM}(X,\Z(j))$ follows from the induction hypothesis and the case of irreducible schemes.
\end{proof}

\section{With $\Q$-coefficients}

Before proving Theorem~\ref{thm:MAIN} in Section 4 under the assumption of resolution of singularities, we shall prove a weaker but unconditional result without assuming any conjectures. We use de Jong's alteration \cite{deJong} in place of smooth compactification and use results of Kelly \cite{Kelly}, in particular, the existence of a localization sequence for motivic cohomology with compact supports with $\Z[1/p]$-coefficients.

\begin{thm}\label{thm:Qvanishing}
For a smooth scheme $X$ over $\F_q$ and $i,j\leq-1$ with $i-j\leq1,$ $$H_i(X,\Q(j))=0.$$
\end{thm}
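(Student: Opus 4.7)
The plan is to convert $H_i(X,\Q(j))$ into a motivic cohomology group with compact supports via Poincar\'e duality, then to compute the latter by combining de Jong's alteration with Kelly's localization sequence, and finally to reduce to the smooth projective case already handled by Proposition~\ref{thm:KYParshin} and Theorem~\ref{thm:KYmain}. After decomposing $X$ into connected components one may assume $X$ is smooth of pure dimension $d$, and I would induct on $d$. The base case $d=0$ is immediate from Lemma~\ref{lem:preliminary}(i): $X$ is a disjoint union of spectra of finite fields, and $H_i(\Spec\F_{q^n},\Q(j))\cong H^{-i}(\Spec\F_{q^n},\Q(-j))=0$ in our range.

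For the inductive step, Kelly's work~\cite{Kelly} supplies, with $\Q$-coefficients and without resolution of singularities, the smooth Poincar\'e duality isomorphism $H_i(X,\Q(j))\cong H_c^{2d-i}(X,\Q(d-j))$, a localization sequence for motivic cohomology with compact supports, and a rational trace for proper surjective generically finite morphisms. I would choose a projective compactification $\bar X\supset X$ and a de Jong alteration $\pi:X'\to \bar X$ with $X'$ smooth projective of pure dimension $d$; put $U=\pi^{-1}(X)$ and $Z=X'\setminus U$. The restriction $f=\pi|_U:U\to X$ is proper generically finite of some positive degree $d_f$, and Kelly's rational trace yields an injection $H_c^{2d-i}(X,\Q(d-j))\hookrightarrow H_c^{2d-i}(U,\Q(d-j))$, so it suffices to kill the right-hand side. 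The $H_c$-localization sequence for $U\subset X'$ gives the exact sequence
\[
H_c^{2d-i-1}(Z,\Q(d-j))\to H_c^{2d-i}(U,\Q(d-j))\to H_c^{2d-i}(X',\Q(d-j)),
\]
whose right term equals $H_i^{BM}(X',\Q(j))$ by smooth-and-proper Poincar\'e duality. For $i\leq -2$ this vanishes by Proposition~\ref{thm:KYParshin}; for $i=-1$ (hence $j=-1,-2$) Theorem~\ref{thm:KYmain} applied componentwise reduces the group to $H_{-1}^{BM}(\Spec\F_{q^{n_k}},\Z(j))\otimes\Q$, which is finite by Remark~\ref{rem:calculation} and hence trivial after tensoring with $\Q$.

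The hard part, I expect, will be dealing with the left term $H_c^{2d-i-1}(Z,\Q(d-j))$: the closed subscheme $Z$ has dimension at most $d-1$ but is generally singular, so the induction hypothesis on smooth schemes is not directly applicable. My plan is to apply de Jong's alteration a second time to obtain a smooth projective $Z'$ of dimension $d_Z=\dim Z\leq d-1$ together with a proper surjective generically finite $\pi_Z:Z'\to Z$; Kelly's rational trace again yields an injection $H_c^{2d-i-1}(Z,\Q(d-j))\hookrightarrow H_c^{2d-i-1}(Z',\Q(d-j))$. Smooth-proper Poincar\'e duality identifies the target with $H_{i'}^{BM}(Z',\Q(j'))$, where $i'=2d_Z-2d+i+1\leq i-1\leq -2$, $j'=d_Z-d+j\leq j-1\leq -2$, and $i'-j'=i-j\leq 1$, so Proposition~\ref{thm:KYParshin} finishes the vanishing. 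Throughout, the indispensable ingredients (Poincar\'e duality, the $H_c$-localization sequence, and the rational transfer along proper generically finite maps) are exactly those that Kelly provides at the level of $\Q$-coefficients without needing resolution of singularities.
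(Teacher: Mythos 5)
Your overall strategy — de Jong alteration, Kelly's localization sequence for $H_c$ with $\Q$-coefficients, a transfer argument, and reduction to the smooth projective case handled by Proposition~\ref{thm:KYParshin} and Theorem~\ref{thm:KYmain} — is in the same spirit as the paper's proof, and your index bookkeeping for $(i',j')$ on the boundary term is correct. However, there is a genuine gap in how you propose to handle the boundary $Z=X'\setminus U$.

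The gap is your appeal to ``Kelly's rational trace for proper surjective generically finite morphisms.'' For a finite flat surjective map of degree $\delta$, a trace with $f_*f^*=\delta$ is standard (the transpose of the graph is a finite correspondence), and for a proper generically finite map between \emph{smooth} schemes one can manufacture a wrong-way map by dualizing the proper pushforward $M^c(U)\to M^c(X)$. But in your treatment of $Z$ you invoke this trace for $\pi_Z:Z'\to Z$ where $Z$ is \emph{singular} (it is $X'\setminus\pi^{-1}(X)$, a closed subscheme of $X'$ with no reason to be smooth). For such $Z$ neither the duality argument nor the finite-correspondence construction applies, and Kelly's thesis does not provide a trace for an alteration of a singular scheme as a black box; what it provides is traces for finite flat covers together with $cdh/ldh$-descent. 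Moreover, your induction is only set up for \emph{smooth} schemes, so the inductive hypothesis cannot absorb $H_c^{2d-i-1}(Z,\Q(d-j))$ either.

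The paper sidesteps this exactly where you run into trouble: it first proves the auxiliary Lemma~\ref{lem:hcvanishing}, which establishes $H_c^{2d-i}(Y,\Q(d-j))=0$ for \emph{arbitrary} (possibly singular) $Y$ of dimension $\leq d-1$, by a double induction that uses localization to pass to the smooth locus and then \emph{shrinks to the \'etale locus of the alteration} so that only a finite \'etale transfer is ever needed. With that lemma in hand, the localization sequence lets one shrink $X$ to any dense open without changing $H_i(X,\Q(j))$, and in particular one may shrink to the locus over which the alteration is finite \'etale — again avoiding any non-finite trace. To repair your argument you would need either to prove a version of Lemma~\ref{lem:hcvanishing} first, or to carry the singular-scheme $H_c$-vanishing along in your induction and handle $Z$ by $cdh$-descent (abstract blow-up squares) rather than by a trace.
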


We need a lemma.

\begin{lem}\label{lem:hcvanishing}
Suppose $X$ is a scheme over $\F_q$ of dimension at most $d-1.$ Then,
$$H_c^{2d-i}(X,\Q(d-j))=0$$
if $i,j\leq0$ and $i-j\leq2.$
\end{lem}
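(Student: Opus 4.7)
The plan is to proceed by induction on $\dim X$, combining de Jong's alteration with Kelly's localization triangle for $M^c$ with $\Q$-coefficients. In the base case $\dim X\leq 0$, the scheme $X$ is a finite disjoint union of spectra of finite fields, each proper, so $H_c^{2d-i}=H^{2d-i}$ vanishes by Lemma~\ref{lem:preliminary}(i) because $d-j\geq d\geq 1$.

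For the inductive step with $\dim X=e\leq d-1$, I apply de Jong's theorem to obtain an alteration $f\colon X'\to X$ with $X'$ smooth projective of dimension $e$, together with a closed $Z\subset X$ of dimension $\leq e-1$ such that $f|_{\tilde U}\colon \tilde U:=f^{-1}(U)\to U:=X\setminus Z$ is finite flat of some degree $n$, and $\tilde Z:=f^{-1}(Z)$ of dimension $\leq e-1$ (generic finiteness of $f$ on each component of $X'$ prevents a full-dimensional component of $\tilde Z$). First, $H_c^{2d-i}(X',\Q(d-j))=CH^{d-j}(X',i-2j;\Q)=0$ for dimensional reasons since $d-j>e=\dim X'$. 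Kelly's localization triangle for the pair $(X',\tilde Z)$,
\begin{equation*}
H_c^{2d-i-1}(\tilde Z,\Q(d-j))\to H_c^{2d-i}(\tilde U,\Q(d-j))\to H_c^{2d-i}(X',\Q(d-j)),
\end{equation*}
then forces $H_c^{2d-i}(\tilde U,\Q(d-j))=0$ once the left term is shown to vanish. The trace map for the finite flat morphism $f|_{\tilde U}$, available with $\Q$-coefficients, exhibits $H_c^{2d-i}(U,\Q(d-j))$ as a direct summand of this zero group. A second Kelly localization for $(X,Z)$ together with the vanishing of $H_c^{2d-i}(Z,\Q(d-j))$ from the same induction concludes $H_c^{2d-i}(X,\Q(d-j))=0$.

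The main obstacle is closing the induction: applying the lemma to $\tilde Z$ or $Z$ at cohomological degree $2d-i-1$ or $2d-i$ does not go through with the same triple $(d,i,j)$ because the hypothesis $i-j\leq 2$ is borderline. The trick is to invoke the inductive hypothesis with the reparametrized triples $(d',i',j')=(d-1,i-1,j-1)$ for the $2d-i-1$ vanishing on $\tilde Z$ and $(d-1,i-2,j-1)$ for the $2d-i$ vanishing on $Z$; each shifted triple still satisfies $i',j'\leq 0$ and $i'-j'\leq 2$, and the effective dimension bound drops to $d-2$, which accommodates $\dim\tilde Z,\dim Z\leq e-1\leq d-2$. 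Besides this bookkeeping, the nontrivial external inputs are Kelly's localization triangle for $M^c$ with $\Z[1/p]$-coefficients and the standard trace for finite flat morphisms in the motivic setting.
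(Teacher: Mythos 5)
Your overall strategy — induction on $\dim X$, de Jong's alteration, the transfer for a finite flat covering with $\Q$-coefficients, and Kelly's localization for $M^c$ — is exactly the one the paper uses, and the reparametrization bookkeeping $(d,i,j)\mapsto(d-1,i-1,j-1)$ to close the induction is also the paper's device. However, there is a genuine gap at the heart of the argument, namely the step ``$H_c^{2d-i}(X',\Q(d-j))=CH^{d-j}(X',i-2j;\Q)=0$.''

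First, if $X$ is not proper, then the alteration source $X'$ \emph{cannot} be projective: an alteration is by definition proper and surjective, so a projective $X'$ would force $X$ to be proper. De Jong's theorem gives a smooth quasi-projective $X'$ together with a smooth projective compactification $\widehat{X'}\supset X'$; the map $f\colon X'\to X$ does not extend to $\widehat{X'}$. Second, and consequently, the identification of $H_c^{2d-i}(X',\Q(d-j))$ with a higher Chow group fails for a smooth but non-proper $X'$. The isomorphism $H^i(X',\Q(j))\cong CH^{j}(X',2j-i;\Q)$ holds for smooth $X'$, and with compact supports one has $H_c^\bullet=H^\bullet$ only when $X'$ is proper; for open $X'$, duality gives $H_c^{2d-i}(X',\Q(d-j))\cong H_{2e-2d+i}(X',\Q(e-d+j))$ ($e=\dim X'$), which is a \emph{motivic homology} group whose vanishing in this range is precisely Theorem~\ref{thm:Qvanishing} — the very statement this lemma is a stepping stone to, so the argument becomes circular. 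You also never reduce to the integral case, which you need before invoking de Jong, though this is a lesser omission.

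The paper avoids both problems by an extra localization. It first reduces to smooth integral $X$ (via the smooth locus $X_{sm}$ and localization), then shows, using the induction hypothesis in the way you describe, that for \emph{any} nonempty open $U\subset X$ the restriction $H_c^{2d_0-i}(U,\Q(d_0-j))\to H_c^{2d_0-i}(X,\Q(d_0-j))$ is an isomorphism. Taking the specific open $U$ supplied by de Jong, the pulled-back scheme $U'$ sits as an open in the smooth projective $\widehat{X'}$, and the same isomorphism applied to $U'\subset\widehat{X'}$ identifies $H_c^{2d_0-i}(U',\Q(d_0-j))$ with $H_c^{2d_0-i}(\widehat{X'},\Q(d_0-j))$; only for the proper $\widehat{X'}$ does one legitimately switch to ordinary motivic cohomology and conclude by the dimension/Proposition~\ref{thm:KYParshin} vanishing. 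If you replace your ``$H_c(X')=CH(X')=0$'' step with this $U'\hookrightarrow\widehat{X'}$ argument (and add the reduction to the smooth integral case), your proof becomes correct and coincides with the paper's.
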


\begin{proof}
First, observe that the result is true for all $d\geq1$ if $\dim X=0$ by Lemma~\ref{lem:preliminary}~(i).  

We shall prove the lemma for a fixed $d=d_0$ by induction on $\dim X.$ In doing so, we may suppose that the lemma is true for $d\leq d_0-1$ (by induction on $d$). Suppose that the lemma is true for schemes of dimension $\leq n-1,$ and let $X$ be a scheme of dimension $n.$ There is a localization sequence~\cite[Proposition 5.5.5]{Kelly}
$$H_c^{2d_0-i}(X_{sm},\Q(d_0-j))\to H_c^{2d_0-i}(X,\Q(d_0-j))\to H_c^{2d_0-i}(X\setminus X_{sm},\Q(d_0-j)),$$
where $X_{sm}$ is the smooth locus of $X.$ The last term vanishes by the induction hypothesis. As for the first term, if we write $X_{sm}=\coprod X_i$ (where the $X_i$ are the connected components of $X$), we have 
$$H_c^{2d_0-i}(X_{sm},\Q(d_0-j)) \cong \bigoplus_i H_c^{2d_0-i}(X_i,\Q(d_0-j)).$$
So, in order to show that the first term is zero, it suffices to show that
$$H_c^{2d_0-i}(X,\Q(d_0-j))=0$$
for a smooth integral scheme $X$ of dimension $n.$ (For schemes of smaller dimensions, the vanishing statement follows from the induction hypothesis.)

Now, let $U$ be a non-empty open subscheme of $X,$ and consider the localization sequence \cite[Proposition 5.5.5]{Kelly}
$$\cdots\to H_c^{2d_0-i-1}(X\setminus U,\Q(d_0-j))\to H_c^{2d_0-i}(U,\Q(d_0-j))$$
$$\to H_c^{2d_0-i}(X,\Q(d_0-j))\to H_c^{2d_0-i}(X\setminus U,\Q(d_0-j))\to\cdots.$$
The last term vanishes by the induction hypothesis and the first term too since the lemma is known for $d=d_0-1$ and since 
$$H_c^{2d_0-i-1}(X\setminus U,\Q(d_0-j))=H_c^{2(d_0-1)-(i-1)}(X\setminus U,\Q((d_0-1)-(j-1))).$$
Thus, we see that there is an isomorphism
$$H_c^{2d_0-i}(U,\Q(d_0-j))\buildrel\sim\over\longrightarrow H_c^{2d_0-i}(X,\Q(d_0-j)).$$ This means that in order to show the claim for $X$, it is enough to prove it for some open subscheme $U$ of $X.$ 

By de Jong's theorem \cite[Theorem 4.1]{deJong}, there is an alteration $\phi: X'\longrightarrow X$ and an open immersion $X'\hookrightarrow \widehat{X'}$ into a smooth projective integral scheme $\widehat{X'}.$ There is a nonempty open subscheme $U$ of $X$ such that the projection $U':=U\times_X X'\buildrel f\over\longrightarrow U$ is finite and \'etale of degree $\delta=[k(U'):k(U)].$ The composition
$$H_c^{2d_0-i}(U,\Q(d_0-j))\buildrel f^*\over\to H_c^{2d_0-i}(U',\Q(d_0-j))\buildrel f_*\over\to H_c^{2d_0-i}(U,\Q(d_0-j))$$
is multiplication by $\delta(\neq0),$ so it is an isomorphism. In particular, $f^*$ is injective. On the other hand, $H_c^{2d_0-i}(U',\Q(d_0-j))=0$ because $U'$ is an open subscheme of a smooth projective integral scheme $\widehat{X'}$ and 
$$H_c^{2d_0-i}(\widehat{X'},\Q(d_0-j))\cong H^{2d_0-i}(\widehat{X'},\Q(d_0-j))\cong H_{2n-2d_0+i}^{BM}(\widehat{X'},\Q(n-d_0+j))=0$$
by Proposition~\ref{thm:KYParshin} (We used $i,j\leq0$ and $i-j\leq2$ here). Hence, by the injectivity of $f^*,$ we conclude that $H_c^{2d_0-i}(U,\Q(d_0-j))=0.$ The lemma is proved.
\end{proof}

\noindent{\it Proof of Theorem~\ref{thm:Qvanishing}}
\nopagebreak

We may assume that $X$ is an integral scheme. Let us write $d:=\dim X.$ If $U$ is an open subscheme of $X,$ the associated localization sequence for motivic cohomology with compact supports \cite[Proposition 5.5.5]{Kelly} gives an exact sequence
$$H_c^{2d-i-1}(X\setminus U,\Q(d-j))\to H_i(U,\Q(j))\to H_i(X,\Q(j))\to H_c^{2d-i}(X\setminus U,\Q(d-j)).$$
Here, we used \cite[Theorem 5.5.14 (3)]{Kelly}. By Lemma~\ref{lem:hcvanishing}, the first and the last terms vanish, so we have an isomorphism
$$H_i(U,\Q(j))\buildrel\sim\over\longrightarrow H_i(X,\Q(j)).$$

As before, by de Jong's theorem, there is an alteration $\phi:X'\longrightarrow X$ and a nonempty open immersion $X'\hookrightarrow \widehat{X'}$ into a smooth projective integral scheme $\widehat{X'}.$ There is an open subscheme $U$ of $X$ such that the projection $$U':=U\times_X X'\buildrel f\over\longrightarrow U$$
is a finite \'etale morphism of degree $\delta.$ The composition 
$$H_i(U,\Q(j))\buildrel f^*\over\longrightarrow H_i(U',\Q(j))\buildrel f_*\over\longrightarrow H_i(U,\Q(j))$$
is multiplication by $\delta,$ so it is an isomorphism. In particular, $f^*$ is injective. But $H_i(U',\Q(j))=0$  because $U'$ is an open subscheme of a smooth projective integral scheme $\widehat{X'},$ for which we know $H_i(\widehat{X'},\Q(j))=0$ by Theorem~\ref{thm:KYmain} and Lemma~\ref{rem:calculation}. Hence, we obtain $H_i(X,\Q(j))\cong H_i(U,\Q(j))=0.$ This proves the theorem.

\section{Proof of Theorem~\ref{thm:MAIN}}

{\it In the rest of this paper, we assume the existence of resolution of singularities in the sense of \cite[Chapter 4, Definition 3.4]{VSF}.} This assumption is needed even to deal with smooth schemes because our argument depends on the existence of smooth compactification. Alternatively, the reader may choose to assume that schemes have dimension at most 3.

Our proof of Theorem~\ref{thm:MAIN} goes as follows. We first prove the vanishing statement for $i\leq-2.$ For smooth schemes, this is done by showing that a smooth compactification induces an isomorphism of motivic homology groups of certain indices (Proposition~\ref{prop:smcpt}) and applying Kondo and Yasuda's result (Proposition~\ref{thm:KYParshin}). For a singular scheme, the statement is proved by induction on dimension using the abstract blow-up sequence associated with a desingularization of the scheme (Proposition~\ref{prop:resolution}). In order to prove the statement for $i=-1,$ we first deal with the smooth case by taking smooth compactifiation (Proposition~\ref{lem:smoothisom}). Then, combining these results, we construct the inverse to $\alpha_X: H_{-1}(X,\Z(j))\longrightarrow H_{-1}(\pi_0(X),\Z(j))$ by using the universal property of a certain pushout diagram of motivic homology groups. We show that this diagram is indeed cocartesian by means of Galois cohomology, following the method of \cite{KY}.

\begin{lem}\label{lem:4.1}
Let $X$ be a scheme over $\F_q$ of dimension at most $d-1.$ If $\dim X=0,$
$$H^{2d-i}(X,\Z(d-j))=0$$
for $i,j\leq0.$ If $\dim X\geq1,$ a desingularization $\tilde X\longrightarrow X$ of $X$ induces isomorphisms
$$H^{2d-i}(X,\Z(d-j))\buildrel\sim\over\longrightarrow H^{2d-i}(\tilde X,\Z(d-j))$$
for $i,j\leq1$ with $i-j\leq2.$
\end{lem}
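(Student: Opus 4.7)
The plan is to prove both parts by induction on $\dim X$, computing the dimension-zero case directly and bootstrapping to positive dimension with the abstract blow-up Mayer--Vietoris long exact sequence.

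For $\dim X=0$, I would reduce to $X=\Spec\F_{q^n}$ and apply the long exact sequence coming from $0\to\Z(d-j)\to\Q(d-j)\to\Q/\Z(d-j)\to 0$. Both rational terms vanish by Lemma~\ref{lem:preliminary}(i), so $H^{2d-i}(\Spec\F_{q^n},\Z(d-j))\cong H^{2d-i-1}(\Spec\F_{q^n},\Q/\Z(d-j))$. For each prime $l\neq p$, Rost--Voevodsky / Beilinson--Lichtenbaum identifies this with \'etale cohomology, which vanishes in degrees $\geq 2$ because $\F_{q^n}$ has cohomological dimension $1$, and vanishes in degree $1$ because $\Q_l/\Z_l$ is divisible (so that $\sigma-1$ acts surjectively on $\Q_l/\Z_l(d-j)$). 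For $l=p$, Geisser--Levine identifies it with a colimit of Zariski cohomologies of the logarithmic de Rham--Witt sheaves $\nu_r^{d-j}$, and these vanish because $\Spec\F_{q^n}$ is zero-dimensional and $\nu_r^b(\F_{q^n})=K_b^M(\F_{q^n})/p^r=0$ for $b\geq 2$, while $\F_{q^n}^\times$ has no $p$-torsion. The field-vanishing $H^a(F,\Z(b))=0$ for $a>b$ covers the remaining overflow cases within $i,j\leq 0$, $i-j\leq 2$.

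For $\dim X\geq 1$ (equivalently $d\geq 2$), I would induct on $\dim X$. Pick a desingularization $\pi:\tilde X\to X$ and a closed subscheme $Z\subsetneq X$ containing the locus where $\pi$ fails to be an isomorphism; set $E=\pi^{-1}(Z)$, so $\dim Z,\dim E\leq d-2$. Under resolution of singularities the abstract blow-up square yields a distinguished triangle $M(E)\to M(\tilde X)\oplus M(Z)\to M(X)\to M(E)[1]$ in $DM_{Nis}^{eff,-}(\F_q)$, hence a Mayer--Vietoris long exact sequence
$$\to H^{n-1}(E,\Z(b))\to H^n(X,\Z(b))\to H^n(\tilde X,\Z(b))\oplus H^n(Z,\Z(b))\to H^n(E,\Z(b))\to$$
with $n=2d-i$ and $b=d-j$; it therefore suffices to verify that $H^{n-1}(E,\Z(b))$, $H^n(Z,\Z(b))$ and $H^n(E,\Z(b))$ all vanish. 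The inductive hypothesis identifies each of $H^*(Z,\Z(b))$ and $H^*(E,\Z(b))$ with the cohomology of a smooth desingularization $\tilde Y$ of dimension $m\leq d-2$, and Poincar\'e duality $H^a(\tilde Y,\Z(b))\cong H^{BM}_{2m-a}(\tilde Y,\Z(m-b))$ then rewrites the desired vanishing as $H^{BM}_{i-2s}(\tilde Y,\Z(j-s))=0$ with $s=d-m\geq 2$. The hypotheses $i,j\leq 1$ and $i-j\leq 2$ immediately give $i-2s\leq-2$, $j-s\leq-1$ and $(i-2s)-(j-s)\leq 1$, so Proposition~\ref{thm:KYParshin} applies and the vanishing follows.

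The main obstacle will be the shifted-index term $H^{n-1}(E,\Z(b))=H^{2d-(i+1)}(E,\Z(d-j))$: its associated pair $(i+1,j)$ exits the inductive iso range $i',j'\leq 1$ once $i=1$, so the inductive replacement of $E$ by a smooth model is not immediate at that degree. I expect to circumvent this either by choosing the resolution so that $E$ itself is smooth (a standard arrangement under resolution of singularities, after which Poincar\'e duality plus Proposition~\ref{thm:KYParshin} applies directly to $E$ without invoking the inductive hypothesis) or by noting that for $d\geq 2$ the dimension-zero vanishing in fact extends to the range $i,j\leq 1$, $i-j\leq 2$ by the same computation (using $H^a(F,\Z(b))=0$ for $a>b$ and the triviality of $K_b^M(\F_{q^n})$ for $b\geq 2$), so that whenever $\dim E=0$ the shifted term still vanishes, and an additional inductive step disposes of any positive-dimensional $E$'s lying inside the safe range.
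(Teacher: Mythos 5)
Your overall strategy — abstract blow-up distinguished triangle, induction on $\dim X,$ Poincar\'e duality to convert to Borel--Moore homology, and then Proposition~\ref{thm:KYParshin} — is the paper's strategy. Two remarks on execution, one cosmetic and one a real gap.

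The $\dim X=0$ step is much more work than needed. Since $X_{red}$ is a disjoint union of spectra of finite fields, hence smooth of dimension $0,$ the paper simply writes $H^{2d-i}(X,\Z(d-j))\cong H^{2d-i}(X_{red},\Z(d-j))\cong H^{BM}_{i-2d}(X_{red},\Z(j-d))$ and applies Proposition~\ref{thm:KYParshin} directly (the bounds $i,j\leq0,$ $i-j\leq2,$ $d\geq1$ give $i-2d\leq-2,$ $j-d\leq-1,$ $(i-2d)-(j-d)\leq1$). Your computation via the $\Q/\Z$ long exact sequence and Geisser--Levine essentially reproves Proposition~\ref{thm:KYParshin} in this special case; it is correct but unnecessary.

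The genuine issue is your treatment of the shifted term $H^{2d-i-1}(Z',\Z(d-j)).$ You correctly identify that the na\"ive re-indexing $(i,j)\mapsto(i+1,j)$ falls outside the inductive range once $i=1,$ but neither of your two proposed workarounds closes the gap. Workaround (a) is false as stated: resolution of singularities lets one arrange the exceptional locus to be a strict normal crossings divisor, whose irreducible components are smooth, but the total space $Z'$ is in general singular at the crossings, so one cannot ``choose the resolution so that $E$ itself is smooth.'' Workaround (b) only handles the case $\dim Z'=0$ and gestures at ``an additional inductive step'' for the positive-dimensional case without making it precise; as written it does not produce the required isomorphism for $Z'$ at index $i+1=2.$ The idea you are missing is that the auxiliary integer $d$ in the statement is a free parameter subject only to $\dim\leq d-1,$ and the inductive hypothesis therefore holds for $Z'$ (which has $\dim Z'\leq d-2$) with $d$ replaced by $d-1.$ Rewriting $H^{2d-i-1}(Z',\Z(d-j))=H^{2(d-1)-(i-1)}(Z',\Z((d-1)-(j-1)))$ shows that the relevant indices are $(i-1,j-1),$ which satisfy $i-1\leq0\leq1,$ $j-1\leq0\leq1,$ and $(i-1)-(j-1)=i-j\leq2;$ the induction hypothesis (first or second part according as $\dim Z'=0$ or $\dim Z'\geq1$) then applies verbatim. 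This reparametrization, rather than a smoothness assumption on $Z',$ is what makes the induction self-consistent.
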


\begin{proof}
If $\dim X=0,$ $X_{red}$ is a finite disjoint union of spectra of finite fields over $\F_q.$ Thus,
$$H^{2d-i}(X,\Z(d-j))\cong H^{2d-i}(X_{red},\Z(d-j))\cong H_{-2d+i}^{BM}(X_{red},\Z(-d+j))=0.$$
The first isomorphism follows because the motive $M(X)$ is isomorphic to the motive $M(X_{red}),$ and the second because $X_{red}$ is smooth. The last equality follows from Lemma~\ref{rem:calculation} because $d\geq1$ implies that $-2d+i\leq-2$ and $-d+j\leq-1.$

We prove the second assertion by induction on the dimension of $X.$ Let $Z$ be the closed subscheme of $X$ on which $\tilde X\longrightarrow X$ is not an isomorphism. Then, the abstract blow-up
\begin{displaymath}
\xymatrix{ Z' \ar[r]^{inc'} \ar[d]_{f'} & \tilde X \ar[d]^{f} \\
 Z \ar[r]_{inc} & X.}
\end{displaymath}
gives rise to a long exact sequence of motivic cohomology groups
$$\cdots\longrightarrow H^{2d-i-1}(Z',\Z(d-j))\longrightarrow H^{2d-i}(X,\Z(d-j))\buildrel (f^*,inc^*)\over\longrightarrow$$
$$H^{2d-i}(\tilde X,\Z(d-j))\oplus H^{2d-i}(Z,\Z(d-j))\buildrel inc'^*-f'^*\over\longrightarrow H^{2d-i}(Z',\Z(d-j))\longrightarrow\cdots.$$
In order to show that $f^*$ is an isomorphism, we shall prove that the three cohomology groups of $Z$ and $Z'$ vanish. 

If $\dim X=1,$ then $\dim Z=\dim Z'=0;$ so $Z_{red}$ and $Z'_{red}$ are finite disjoint unions of spectra of finite fields over $\F_q$. Hence, it suffices to observe that the first claim of the lemma implies for any finite field $\F$ over $\F_q$ the following (Note that we are in the range $d-1\geq1$):
$$H^{2d-i-1}(\Spec \F,\Z(d-j))=H^{2(d-1)-(i-1)}(\Spec \F,\Z((d-1)-(j-1)))=0,$$
$$H^{2d-i}(\Spec \F,\Z(d-j))=H^{2(d-1)-(i-2)}(\Spec \F,\Z((d-1)-(j-1)))=0.$$

Now, suppose that $\dim X\geq2$ and assume that the lemma is known for schemes of smaller dimension. We shall again prove that the cohomology groups of $Z$ and $Z'$ in the above long exact sequence vanish. Let us prove $H^{2d-i}(Z,\Z(d-j))=0.$ Since $\dim Z<\dim X,$ by the induction hypothesis, we have an isomorphism  
$$H^{2d-i}(Z,\Z(d-j))\buildrel\sim\over\to H^{2d-i}(\tilde Z,\Z(d-j)),$$
where $\tilde Z$ is a desingularization of $Z.$ Since $\tilde Z$ is a smooth scheme, every connected component $Z_r$ $(r=1,\cdots,r_0)$ of $\tilde Z$ is also smooth. By \cite[Theorem 19.1]{MVW},
\begin{eqnarray}
H^{2d-i}(\tilde Z,\Z(d-j))&\cong&\bigoplus_{r=1}^{r_0}H^{2d-i}(Z_r,\Z(d-j))\nonumber\\
&\cong&\bigoplus_{r=1}^{r_0} H^{BM}_{2\dim Z_r-2d+i}(Z_r,\Z(\dim Z_r-d+j)).\nonumber
\end{eqnarray}
The last group vanishes if $i\leq2,$ $j\leq1$ and $i-j\leq3.$ Indeed, since $\dim Z_r\leq d-2,$ when $i$ and $j$ satisfy these inequalities, we have
$$2\dim Z_r-2d+i\leq 2(d-2)-2d+i=i-4\leq-2,$$
$$\dim Z_r-d+j\leq (d-2)-d+j=j-2\leq-1,$$
and
$$(2\dim Z_r-2d+i)-(\dim Z_r-d+j)=\dim Z_r-d+i-j\leq i-j-2\leq1.$$ Hence, by Proposition~\ref{thm:KYParshin}, we obtain
$$\bigoplus_{r=1}^{r_0} H^{BM}_{2\dim Z_r-2d+i}(Z_r,\Z(\dim Z_r-d+j))=0.$$

Similarly, we can calculate $H^{2d-i}(Z',\Z(d-j))=0$ if $i\leq2,j\leq1$ and $i-j\leq3,$ and $H^{2d-i-1}(Z',\Z(d-j))=0$ if $i\leq1,$ $j\leq1$ and $i-j\leq2$. Therefore, the lemma follows.
\end{proof}

\begin{prop}\label{prop:smcpt}
Suppose $X$ is a connected smooth scheme of dimension $d$ over $\F_q.$ Then, a smooth compactification $X\hookrightarrow X'$ of $X$ induces isomorphisms
$$H_i(X,\Z(j))\buildrel\sim\over\longrightarrow H_i(X',\Z(j))$$
for all $i\leq-1$ and $j\leq0$ with $i-j\leq1.$
\end{prop}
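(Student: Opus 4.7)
The plan is to use Voevodsky's duality to translate the question into motivic cohomology with compact supports, and then to apply the localization sequence attached to the boundary $Z := X' \setminus X$ in order to reduce to vanishings handled by Lemma~\ref{lem:4.1} and Proposition~\ref{thm:KYParshin}.

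Since $X$ is smooth of pure dimension $d$ and the smooth proper compactification $X'$ satisfies $M(X')=M^c(X')$, the duality \cite[Chapter 5, Theorem 4.3.7(3)]{VSF} gives canonical identifications $H_i(X,\Z(j)) \cong H_c^{2d-i}(X,\Z(d-j))$ and $H_i(X',\Z(j)) \cong H^{2d-i}(X',\Z(d-j))$. By naturality, the map induced by the open immersion $X \hookrightarrow X'$ corresponds under these identifications to the middle map in the long exact sequence
$$H_c^{2d-i-1}(Z,\Z(d-j)) \to H_c^{2d-i}(X,\Z(d-j)) \to H_c^{2d-i}(X',\Z(d-j)) \to H_c^{2d-i}(Z,\Z(d-j))$$
associated to the distinguished triangle $M^c(Z) \to M^c(X') \to M^c(X) \to M^c(Z)[1]$. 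It therefore suffices to show that the two outer terms vanish.

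Since $Z$ is closed in the proper scheme $X'$, it is itself proper and has $\dim Z \leq d-1$, so the outer terms agree with $H^{2d-i}(Z,\Z(d-j))$ and $H^{2d-i-1}(Z,\Z(d-j))$. I would then apply Lemma~\ref{lem:4.1} to each; the hypotheses $i \leq -1$, $j \leq 0$, $i - j \leq 1$ immediately imply the bounds required by that lemma both in the unshifted case and in the shifted case $i \leadsto i+1$. When $\dim Z = 0$ the lemma yields the vanishing directly; when $\dim Z \geq 1$ it reduces the computation to the same groups on a desingularization $\tilde Z \to Z$, which is smooth and proper.

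Decomposing $\tilde Z = \coprod_r \tilde Z_r$ into smooth projective connected components of dimension $d_r \leq d-1$ and setting $e := d - d_r \geq 1$, Poincar\'e duality rewrites the relevant cohomology groups of $\tilde Z_r$ as Borel--Moore groups $H^{BM}_{i-2e}(\tilde Z_r,\Z(j-e))$ and $H^{BM}_{i+1-2e}(\tilde Z_r,\Z(j-e))$. A short check using $e \geq 1$ and the hypotheses on $i,j$ shows that both pairs of indices satisfy the conditions $i' \leq -2$, $j' \leq -1$, $i' - j' \leq 1$ of Proposition~\ref{thm:KYParshin}, which yields the desired vanishing. The main technical point is the naturality of Voevodsky's duality with respect to the open immersion $X \hookrightarrow X'$; everything else reduces to bookkeeping of the index inequalities.
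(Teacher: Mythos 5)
Your proposal is correct and follows essentially the same route as the paper's proof: both pass via duality to the localization sequence in $H_c^*$ for the boundary $Z=X'\setminus X$, then invoke Lemma~\ref{lem:4.1} to replace $Z$ by a smooth proper desingularization, and finally kill the resulting Borel--Moore groups with Proposition~\ref{thm:KYParshin}. The index bookkeeping you carried out for the unshifted and shifted cases matches the paper's.
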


\begin{proof}
There is nothing to prove if $d=0,$ so we deal with the case where $d\geq1.$ Since there is a localization sequence
$$H_c^{2d-i-1}(X'\setminus X,\Z(d-j))\to H_i(X,\Z(j))\to H_i(X',\Z(j))\to H_c^{2d-i}(X'\setminus X,\Z(d-j))$$
and $X'\setminus X$ is proper,
it suffices to show $H^{2d-i-1}(X'\setminus X,\Z(d-j))=0$ and $H^{2d-i}(X'\setminus X,\Z(d-j))=0.$ We prove this for the first group. The proof for the second group is identical. First, note that by the first assertion of Lemma~\ref{lem:4.1}, we may assume that the irreducible components of $X'\setminus X$ have non-zero dimension. Now, let $\widetilde{X'\setminus X}$ be a desingularization of $X'\setminus X$ and write its decomposition into connected components as $\widetilde{X'\setminus X}=\coprod_{s=1}^{s_0}X_s.$ With the second assertion of Lemma~\ref{lem:4.1}, we can calculate
\begin{eqnarray}
H^{2d-i-1}(X'\setminus X,\Z(d-j)) &\cong& H^{2d-i-1}(\widetilde{X'\setminus X},\Z(d-j))\nonumber\\
&\cong& \bigoplus_{s=1}^{s_0}H^{2d-i-1}(X_s,\Z(d-j))\nonumber\\
&\cong& \bigoplus_{s=1}^{s_0}H_{2\dim X_s-2d+i+1}^{BM}(X_s,\Z(\dim X_s-d+j)),\nonumber
\end{eqnarray}
and the last group vanishes if $i\leq-1,$ $j\leq0$ and $i-j\leq1$ by Proposition~\ref{thm:KYParshin}.
\end{proof}

We are now able to prove the first half of Theorem~\ref{thm:MAIN}.

\begin{prop}\label{prop:resolution}
Let $X$ be a scheme over $\F_q.$ Then, $H_i(X,\Z(j))=0$ if $i\leq -2,$ $j\leq-1$ and $i-j\leq1.$
\end{prop}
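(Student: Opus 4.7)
The plan is to induct on $d:=\dim X,$ using Proposition~\ref{prop:smcpt} together with Proposition~\ref{thm:KYParshin} to dispose of the smooth case, and then using the blow-up distinguished triangle provided by resolution of singularities to pass from a desingularization back to $X$ itself.

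First I would handle the smooth case. Since $M(X)\simeq M(X_{red})$ in $DM_{Nis}^{eff,-}(\F_q),$ one may replace $X$ by $X_{red}.$ When $X$ is smooth, embed it into a smooth compactification $X\hookrightarrow X';$ our hypotheses $i\leq-2,$ $j\leq-1,$ $i-j\leq1$ lie inside the range of Proposition~\ref{prop:smcpt}, so $H_i(X,\Z(j))\cong H_i(X',\Z(j)).$ Since $X'$ is proper, the canonical map $M(X')\to M^c(X')$ is an isomorphism, hence $H_i(X',\Z(j))=H_i^{BM}(X',\Z(j)),$ which vanishes by Proposition~\ref{thm:KYParshin}. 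This already settles the base case $d=0,$ where $X_{red}$ is a finite disjoint union of spectra of finite field extensions of $\F_q$ and thus smooth.

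For the inductive step, suppose the proposition holds in all dimensions $<d,$ and let $\pi:\tilde X\to X$ be a desingularization. Let $Z\subset X$ be the reduced closed subscheme over which $\pi$ fails to be an isomorphism, and set $Z':=(\pi^{-1}(Z))_{red}.$ The abstract blow-up square
\begin{displaymath}
\xymatrix{ Z' \ar[r] \ar[d] & \tilde X \ar[d]^{\pi} \\ Z \ar[r] & X }
\end{displaymath}
produces, under resolution of singularities, a distinguished triangle
$$M(Z')\longrightarrow M(Z)\oplus M(\tilde X)\longrightarrow M(X)\longrightarrow M(Z')[1]$$
in $DM_{Nis}^{eff,-}(\F_q)$ (compare \cite[Chapter 5, Proposition 4.1.3]{VSF}), and the associated long exact sequence of motivic homology groups contains
$$H_i(Z',\Z(j))\to H_i(Z,\Z(j))\oplus H_i(\tilde X,\Z(j))\to H_i(X,\Z(j))\to H_{i-1}(Z',\Z(j)).$$
Since $\dim Z,\dim Z'\leq d-1,$ the induction hypothesis kills $H_i(Z,\Z(j)),$ $H_i(Z',\Z(j)),$ and also $H_{i-1}(Z',\Z(j))$ (the bounds $i-1\leq-3,$ $j\leq-1,$ $(i-1)-j\leq0$ are immediate), while the smooth case handled above kills $H_i(\tilde X,\Z(j)).$ Hence $H_i(X,\Z(j))=0.$

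The only step requiring genuine input is the existence of the blow-up distinguished triangle, which is exactly where the standing hypothesis of resolution of singularities is used; beyond that the argument is routine bookkeeping to check that every index appearing in the long exact sequence still lies in the allowed range.
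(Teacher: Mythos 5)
Your proof is correct and follows essentially the same route as the paper: an induction on dimension driven by the abstract blow-up long exact sequence, with the smooth/desingularized term killed via Proposition~\ref{prop:smcpt} (smooth compactification) and Proposition~\ref{thm:KYParshin}, and the lower-dimensional terms killed by the induction hypothesis. The only cosmetic difference is that you isolate the smooth case (including $d=0$) up front, while the paper folds it into the inductive step.
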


\begin{proof}
If $\dim X=0,$ the proposition holds by Lemma~\ref{rem:calculation}. Let us assume that $\dim X\geq1$ and prove the proposition by induction on $\dim X.$ Let $Z$ be a closed subscheme of $X$ which contains all singular points of $X$ and has dimension less than that of $X.$ The abstract blow-up
\begin{displaymath}
\xymatrix{ Z' \ar[r]^{inc'} \ar[d]_{f'} & \tilde X \ar[d]^f\\
 Z \ar[r]_{inc} & X}
\end{displaymath}
gives rise to a long exact sequence
$$H_i(Z',\Z(j))\buildrel (f'_*,inc'_*) \over\longrightarrow H_i(Z,\Z(j))\oplus H_i(\tilde X,\Z(j))\buildrel inc_*-f_*\over\longrightarrow H_i(X,\Z(j))$$
$$\longrightarrow H_{i-1}(Z',\Z(j)).$$
By the induction hypothesis, 
\begin{center}
$H_i(Z',\Z(j))=0,$ $H_i(Z,\Z(j))=0$ and $H_{i-1}(Z',\Z(j))=0.$
\end{center}
Hence,
$$H_i(X,\Z(j))\cong H_i(\tilde X,\Z(j))\cong H_i({\tilde X}',\Z(j))=0,$$
where ${\tilde X}'$ denotes a smooth compactification of $\tilde X,$ the second isomorphism follows from Proposition~\ref{prop:smcpt}, and the last group vanishes by Proposition~\ref{thm:KYParshin}.
\end{proof}

Next, we shall consider the case where $i=-1.$

\begin{lem}\label{lem:geoconn}
Let $X$ be a geometrically connected scheme over a field $k$ and $i:X\hookrightarrow X'$ be a compactification, i.e. an open immersion into a proper scheme $X'$ with dense image. Then, $X'$ is geometrically connected over $k.$
\end{lem}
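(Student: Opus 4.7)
The goal is to show that $X'_{\bar k} := X' \times_k \bar k$ is connected, where $\bar k$ is an algebraic closure of $k$. The plan has two steps: first, verify that the base-changed open immersion $i_{\bar k} \colon X_{\bar k} \hookrightarrow X'_{\bar k}$ still has dense image; second, combine this density with connectedness of $X_{\bar k}$ to deduce connectedness of $X'_{\bar k}$.

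For the density step, I would argue that the projection $\pi \colon X'_{\bar k} \to X'$ induces a bijection between the generic points of the irreducible components of $X'_{\bar k}$ and the points of $X'_{\bar k}$ lying above the generic points of the irreducible components of $X'$. Flatness of $\pi$ yields going-down, so every generic point of $X'_{\bar k}$ must project to a generic point of $X'$; conversely, since $\bar k / k$ is algebraic, each fibre $\Spec(\kappa(y) \otimes_k \bar k)$ is zero-dimensional, so a point of $X'_{\bar k}$ lying above a generic point of $X'$ admits no proper generalisation inside its fibre and is therefore itself a generic point of $X'_{\bar k}$. Because $X$ is dense in $X'$, every generic point of $X'$ lies in $X$; hence every generic point of $X'_{\bar k}$ lies in $X_{\bar k}$, which is the density statement.

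Connectedness then follows by a standard density argument. If $X'_{\bar k} = A \sqcup B$ is a decomposition into nonempty clopen subsets, then $A \cap X_{\bar k}$ and $B \cap X_{\bar k}$ form a clopen decomposition of $X_{\bar k}$; since $X_{\bar k}$ is connected (as $X$ is geometrically connected), one of these, say $A \cap X_{\bar k}$, is empty, and thus $X_{\bar k} \subseteq B$. Because $B$ is closed in $X'_{\bar k}$ and $X_{\bar k}$ is dense, this forces $B = X'_{\bar k}$, contradicting the nonemptiness of $A$.

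The main obstacle is the density step: the base change $\Spec \bar k \to \Spec k$ need not be locally of finite presentation, so the clean fact ``flat plus locally of finite presentation implies open'' is not available to transfer density directly. Instead one works with the weaker but sufficient topological consequences of flatness, namely the going-down property, together with the fact that each fibre of $\pi$ is zero-dimensional because $\bar k / k$ is algebraic.
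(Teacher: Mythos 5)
Your proof is correct, but it takes a genuinely different route from the paper's. The paper's proof is a short functorial argument: since $X$ is dense and connected in $X'$, $X'$ is connected; by the criterion in Liu's book (Ch.~10, Cor.~2.21(a)) a connected $k$-scheme $T$ is geometrically connected iff $\pi_0(T)$ has a $k$-rational point; the open immersion $i$ induces a $k$-morphism $\pi_0(X)\to\pi_0(X')$, and $\pi_0(X)=\Spec k$ because $X$ is geometrically connected, giving the required rational point. By contrast, you work directly with the base change $\pi\colon X'_{\bar k}\to X'$, showing (via going-down for the flat map $\pi$ plus zero-dimensionality of the fibres of $\Spec\bar k\to\Spec k$) that the generic points of $X'_{\bar k}$ are exactly the points over generic points of $X'$; since $X$ is open and dense, these all lie in $X_{\bar k}$, so $X_{\bar k}$ is dense in $X'_{\bar k}$, and the standard density-plus-connectedness argument finishes. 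Your route is more elementary and self-contained (no appeal to the $\pi_0$-criterion), and you correctly flag the subtlety that ``flat + locally of finite presentation $\Rightarrow$ open'' is unavailable here. The paper's route is shorter and has the additional payoff of immediately showing $\pi_0(X)=\pi_0(X')$, which the paper records as Remark~\ref{rem:A} and actually needs later (in Proposition~\ref{lem:smoothisom}); if you adopted your proof you would still need to establish that equality of $\pi_0$'s separately.
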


\begin{proof}
Since $X'$ is connected, it is enough to show that $\pi_0(X')$ has a $k$-rational point (\cite[Chapter 10, Corollary 2.21(a)]{Liu}). Now, $i$ induces a $k$-morphism $\pi_0(X)\longrightarrow\pi_0(X').$ Since $X$ is geometrically connected over a field, $\pi_0(X)=\Spec k.$ So, this morphism defines a $k$-rational point on $\pi_0(X').$
\end{proof}

\begin{rem}\label{rem:A}
With the same notation, the above proof shows that $\pi_0(X)=\pi_0(X').$
\end{rem}

We need the independence of motivic homology from the choice of the base field.

\begin{lem}\label{lem:independence}
If $l/k$ is a finite extension of fields and $X$ is a scheme of finite type over $l,$ we have a canonical isomorphism
$$Hom_{DM_{Nis}^-(k)}(\Z(j)[i],M(X))\cong Hom_{DM_{Nis}^-(l)}(\Z(j)[i],M(X))$$
for all $i\in\Z$ and $j\in\Z_{\leq0},$ where on the left hand side, $X$ is regarded as a scheme over $k$ by the composition $X\buildrel str\over\to\Spec l\to\Spec k.$
\end{lem}

\begin{proof}
For $j<0,$ by \cite[Corollary 15.3]{MVW} and \cite[Corollary 4.10]{Voevodskybday}, there is an isomorphism 
$$Hom_{DM_{Nis}^-(l)}(\Z(j)[i],M(X))\cong H_{i-2j-1}\Big(\frac{Cor_l(\Delta^*_l,X\times_l(\mathbb A^{-j}_l-\{0\}))}{Cor_l(\Delta^*_l,X\times_l\{1\})}\Big),$$
where $Cor$ denotes the group of finite correspondences. Since $l$ is a finite extension of $k,$ if $S$ is a scheme over $l$ and $T$ is over $k,$ we have $Cor_l(T\times_k l,S)\cong Cor_k(T,S).$ Hence, the right hand side is isomorphic to 
$$H_{i-2j-1}\Big(\frac{Cor_k(\Delta^*_k,X\times_k(\mathbb A^{-j}_k-\{0\}))}{Cor_k(\Delta^*_k,X\times_k\{1\})}\Big),$$
which is, in turn, isomorphic to $Hom_{DM_{Nis}^-(k)}(\Z(j)[i],M(X)).$
\end{proof}

\begin{prop}\label{lem:smoothisom}
If $X$ is smooth over $\F_q,$ there are canonical isomorphisms
$$\phi:H_{-1}(X,\Z(j))\buildrel\sim\over\longrightarrow H_{-1}(\pi_0(X),\Z(j))$$
for $j=-1$ and $-2.$
\end{prop}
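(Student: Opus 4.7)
The plan is to reduce to the connected case and then factor the canonical map $X\to\pi_0(X)$ through a smooth compactification. Both $H_{-1}(-,\Z(j))$ and $\pi_0$ are additive on disjoint unions, so without loss of generality $X$ is connected; being smooth and connected, $X$ is also integral. Invoking the resolution-of-singularities hypothesis, fix an open immersion $X\hookrightarrow X'$ into a smooth proper integral scheme $X'$. By Proposition~\ref{prop:smcpt}, applied with $(i,j)=(-1,-1)$ or $(-1,-2)$ (so that $i-j\leq 1$), the inclusion induces an isomorphism $H_{-1}(X,\Z(j))\buildrel\sim\over\longrightarrow H_{-1}(X',\Z(j))$.

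The next step is to identify $H_{-1}(X',\Z(j))$ with $H_{-1}(\pi_0(X'),\Z(j))$ via the theorem of Kondo and Yasuda. Since $X'$ is proper, $M(X')=M^c(X')$, so motivic homology agrees with Borel--Moore homology on $X'$, and Theorem~\ref{thm:KYmain} supplies an isomorphism $H_{-1}^{BM}(X',\Z(j))\buildrel\sim\over\longrightarrow H_{-1}^{BM}(\Spec\mathcal O_{X'}(X'),\Z(j))$ given by pushforward along the canonical morphism $X'\to\Spec\mathcal O_{X'}(X')$. Because $X'$ is proper and integral over $\F_q$, $\mathcal O_{X'}(X')$ is a finite field extension $k''$ of $\F_q$; as $X'$ is smooth and hence geometrically reduced, $k''$ is automatically separable, so $k''=\mathcal O_{X'}(X')^{\acute et}$ and $\Spec\mathcal O_{X'}(X')=\pi_0(X')$. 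Since $\pi_0(X')$ is proper over $\F_q$, its Borel--Moore and motivic homology coincide as well, so the pushforward becomes an isomorphism $H_{-1}(X',\Z(j))\buildrel\sim\over\longrightarrow H_{-1}(\pi_0(X'),\Z(j))$.

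It remains to identify $\pi_0(X')$ with $\pi_0(X)$ and to confirm canonicity. Let $k'=\mathcal O_X(X)^{\acute et}$ be the field of constants of $X$, so that $X$ is geometrically connected over $k'$. Lemma~\ref{lem:geoconn} applied over $k'$ shows $X'$ is geometrically connected over $k'$, and Remark~\ref{rem:A} then gives $\pi_0(X)=\Spec k'=\pi_0(X')$. Composing the isomorphisms constructed above produces an isomorphism $H_{-1}(X,\Z(j))\to H_{-1}(\pi_0(X),\Z(j))$. The main (bookkeeping) obstacle is to verify that this composite coincides with the canonical $\phi$ induced by the structure morphism $X\to\pi_0(X)$; by functoriality of $\pi_0$ and of motivic homology, this amounts to checking that the scheme-level factorization $X\hookrightarrow X'\to \Spec\mathcal O_{X'}(X')=\pi_0(X)$ agrees with $X\to\pi_0(X)$, which in turn follows by unwinding the definitions: on global sections, the restriction $\mathcal O_{X'}(X')\to\mathcal O_X(X)$ is injective and identifies $\mathcal O_{X'}(X')=k''$ with $k'=\mathcal O_X(X)^{\acute et}$.
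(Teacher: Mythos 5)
Your proof is correct and follows essentially the same route as the paper's: reduce to $X$ connected, pass to a smooth compactification $X'$ via Proposition~\ref{prop:smcpt}, apply Theorem~\ref{thm:KYmain} to the proper scheme $X'$, and identify $\pi_0(X)$ with $\pi_0(X')$ using Lemma~\ref{lem:geoconn} and Remark~\ref{rem:A}. The only stylistic difference is that the paper regards $X$ as a scheme over $\pi_0(X)$ from the outset (using that negative motivic homology is insensitive to change of base field), which streamlines the canonicity check into a single commutative square, whereas you perform the base-field bookkeeping at the end; both arrangements are fine.
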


\begin{proof}
We may assume that $X$ is connected and regard it as a scheme over $\pi_0(X)$ by Lemma~\ref{lem:independence}. Now $X$ is geometrically connected as a scheme over $\pi_0(X),$ so its smooth compactification $X'$ is also smooth over $\pi_0(X)$ by Lemma~\ref{lem:geoconn}. Now, the map $\phi$ fits in the commutative diagram
\begin{displaymath}
\xymatrix{ H_{-1}(X,\Z(j)) \ar[r]^\phi \ar[d]_\sim & H_{-1}(\pi_0(X),\Z(j)) \ar@{=}[d] \\
 H_{-1}(X',\Z(j)) \ar[r]^\sim & H_{-1}(\pi_0(X'),\Z(j)),}
\end{displaymath}
where the left vertical map is an isomorphism by Proposition~\ref{prop:smcpt}, the bottom horizontal map is an isomorphism by Theorem~\ref{thm:KYmain} and the right vertical equality follows from Remark~\ref{rem:A}. Thus, $\phi$ is an isomorphism.
\end{proof}

In order to compute motivic homology of singular schemes, we shall now study how motivic homology groups behave under resolution of singularities.

\begin{lem}[{\cite[Lemma 2.7]{KY}}]\label{lem:subs}
For two finite fields $\F_{q^n}\subset\F_{q^m},$ the canonical map
$$H_{-1}(\Spec\F_{q^m},\Z(j))\longrightarrow H_{-1}(\Spec\F_{q^n},\Z(j))$$
is surjective if $j<0.$
\end{lem}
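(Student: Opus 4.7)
My plan is to identify the pushforward explicitly with a norm map whose surjectivity is classical. To set up, I would first invoke Remark~\ref{rem:calculation} to obtain for $r\in\{n,m\}$ the canonical chain of isomorphisms
$$H_{-1}(\Spec\F_{q^r},\Z(j)) \cong \bigoplus_{l\neq p} H_{\acute et}^0(\Spec\F_{q^r},\Q_l/\Z_l(-j)) \cong \F_{q^{-rj}}^\times,$$
coming from Poincar\'e duality for smooth proper zero-dimensional schemes, the motivic-to-\'etale comparison of Geisser-Levine and Rost-Voevodsky, and the fact that Frobenius over $\F_{q^r}$ acts on $\Q_l/\Z_l(-j)$ as multiplication by $q^{-rj}$, so its Galois invariants form the cyclic group of order $q^{-rj}-1$ (which is itself prime to $p$).

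Next, I would verify that under these identifications the pushforward induced by $\pi:\Spec\F_{q^m}\to\Spec\F_{q^n}$ corresponds to the \'etale corestriction. The covariant pushforward on motivic homology, coming from $M(\Spec\F_{q^m})\to M(\Spec\F_{q^n})$ in $DM$, becomes the Gysin trace on motivic cohomology via Poincar\'e duality, and this in turn matches the \'etale corestriction under the Bloch-Kato comparison. Setting $k=m/n$, the corestriction from $G_{\F_{q^m}}$-invariants to $G_{\F_{q^n}}$-invariants is a sum over coset representatives of $G_{\F_{q^n}}/G_{\F_{q^m}}\cong\Z/k$; since Frobenius acts by multiplication by $q^{-nj}$, this sum equals multiplication by
$$c := 1+q^{-nj}+q^{-2nj}+\cdots+q^{-(k-1)nj}=\frac{q^{-mj}-1}{q^{-nj}-1}.$$

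The conclusion is then an elementary check. Viewing the source and target as the subgroups $\frac{1}{q^{-mj}-1}\Z/\Z$ and $\frac{1}{q^{-nj}-1}\Z/\Z$ of $\Q/\Z$, the identity $c\cdot\frac{1}{q^{-mj}-1}=\frac{1}{q^{-nj}-1}$ shows that multiplication by $c$ sends a generator of the source to a generator of the target, hence is surjective. Equivalently, the map is the classical norm $N_{\F_{q^{-mj}}/\F_{q^{-nj}}}:\F_{q^{-mj}}^\times\to\F_{q^{-nj}}^\times$, whose surjectivity is standard.

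The main obstacle is the second step: establishing the naturality of the chain of isomorphisms in Remark~\ref{rem:calculation} with respect to $\pi$, that is, matching motivic pushforward with \'etale corestriction through Poincar\'e duality and the Bloch-Kato/Geisser-Levine comparison. Once this naturality is in place, the remaining computation is purely formal.
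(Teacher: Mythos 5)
Your proposal matches the paper's proof: both transport the problem to \'etale cohomology via the cycle class isomorphism from Remark~\ref{rem:calculation} and then deduce surjectivity from the corresponding \'etale statement. The naturality you flag as the main obstacle is exactly the point the paper handles by citing \cite[Lemma 3.5(2)]{GLblochkato} for compatibility of the cycle map with pushforward along finite morphisms, and where the paper invokes Soul\'e for the \'etale surjectivity you instead compute the corestriction explicitly as multiplication by $(q^{-mj}-1)/(q^{-nj}-1)$ and check it hits a generator, which is a correct and slightly more self-contained finish of the same argument.
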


\begin{proof}
As we have seen in the proof of Lemma~\ref{rem:calculation}, the cycle class map gives an isomorphism 
$$H_{-1}(\Spec\F,\Z(j))\cong \bigoplus_{l\neq p}H_{\acute et}^0(\Spec\F,\Q_l/\Z_l(-j))$$
for $j\leq-1$ and a finite field $\F.$ Now, the cycle class map is compatible with the pushforward along a finite morphism (\cite[Lemma 3.5 (2)]{GLblochkato}), so the surjectivity follows from the corresponding statement for \'etale cohomology (\cite[Lemma 6(iii), p.269 and IV.1.7, p.283]{Soule}). 
\end{proof}

\begin{lem}\label{lem:desingisom}
Let $X$ be a scheme over $\F_q,$ $f:\tilde X\longrightarrow X$ be a desingularization, and $j=-1$ or $-2.$ Then, the map
$$f_*:H_{-1}(\tilde X,\Z(j))\longrightarrow H_{-1}(X,\Z(j))$$
is surjective.
\end{lem}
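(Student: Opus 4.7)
The plan is to induct on $\dim X$, reducing via the abstract blow-up Mayer--Vietoris sequence and the already-proved vanishing of Proposition~\ref{thm:KYParshin} to surjectivity statements that can be read off from Proposition~\ref{lem:smoothisom} and Lemma~\ref{lem:subs}. The base case $\dim X=0$ is formal: since any integral closed subscheme of $U\times X$ lies in $U\times X_{\mathrm{red}}$, the presheaves of finite correspondences satisfy $L(X)=L(X_{\mathrm{red}})$, so motivic homology is unaffected by nilpotent thickenings; $X_{\mathrm{red}}$ is smooth, and any desingularization of $X$ factors through it as an isomorphism, making $f_*$ an isomorphism.

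For the inductive step I choose a closed subscheme $Z\subsetneq X$ containing the locus on which $f$ is not an isomorphism and set $Z':=f^{-1}(Z)$. The abstract blow-up long exact sequence reads
\begin{equation*}
H_{-1}(Z',\Z(j)) \xrightarrow{(\alpha_*,-\beta_*)} H_{-1}(Z,\Z(j)) \oplus H_{-1}(\tilde X,\Z(j)) \xrightarrow{\gamma_*+f_*} H_{-1}(X,\Z(j)) \to H_{-2}(Z',\Z(j)),
\end{equation*}
with $\alpha\colon Z'\to Z$, $\beta\colon Z'\hookrightarrow\tilde X$, $\gamma\colon Z\hookrightarrow X$. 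Since $\dim Z'<\dim X$ and $j=-1,-2$, Proposition~\ref{thm:KYParshin} kills the rightmost term, so $\gamma_*+f_*$ is surjective. The relation $f\circ\beta=\gamma\circ\alpha$ gives $f_*\beta_*=\gamma_*\alpha_*$, and a short diagram chase reduces the surjectivity of $f_*$ to that of $\alpha_*\colon H_{-1}(Z',\Z(j))\to H_{-1}(Z,\Z(j))$.

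To establish the latter I apply the inductive hypothesis to $Z$: fix a desingularization $\tilde f_Z\colon\tilde Z\to Z$ with surjective pushforward on $H_{-1}$, and write a given element as $\beta=(\tilde f_Z)_*\beta'$. Form the fiber product $\tilde Z\times_Z Z'$ and, using the assumed resolution of singularities, pick a desingularization $Y\to\tilde Z\times_Z Z'$; then $Y$ is smooth and the two composite projections $\phi\colon Y\to\tilde Z$ and $\psi\colon Y\to Z'$ are proper and surjective with $\tilde f_Z\circ\phi=\alpha\circ\psi$. Any lift $\tilde\beta'\in H_{-1}(Y,\Z(j))$ of $\beta'$ through $\phi_*$ then produces $\delta:=\psi_*\tilde\beta'$ satisfying $\alpha_*\delta=\beta$, as required.

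The hard part will be verifying that $\phi_*\colon H_{-1}(Y,\Z(j))\to H_{-1}(\tilde Z,\Z(j))$ is surjective. Both source and target are smooth, so Proposition~\ref{lem:smoothisom} identifies these groups with direct sums of terms of the form $H_{-1}(\Spec\F_{q^n},\Z(j))$ indexed by connected components. Surjectivity of $\phi$ on underlying spaces guarantees that every connected component $V$ of $\tilde Z$ is hit by some component $U$ of $Y$; the induced morphism $U\to V$ of $\F_q$-schemes yields an inclusion $\F_{q^{n_V}}\hookrightarrow\F_{q^{m_U}}$ of fields of constants, and Lemma~\ref{lem:subs} provides the surjectivity of the resulting transfer $H_{-1}(\Spec\F_{q^{m_U}},\Z(j))\twoheadrightarrow H_{-1}(\Spec\F_{q^{n_V}},\Z(j))$. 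Assembling these surjections component-by-component, and using the naturality of Proposition~\ref{lem:smoothisom}, produces the desired surjectivity of $\phi_*$.
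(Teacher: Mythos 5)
Your proposal is correct and genuinely differs from the paper's proof in its treatment of the key reduction. Both proofs set up the same induction, use the same abstract blow-up exact sequence with the vanishing of $H_{-2}(Z',\Z(j))$ from Proposition~\ref{prop:resolution} (the paper cites this vanishing; you cite Proposition~\ref{thm:KYParshin}, which gives the same conclusion for the proper scheme $Z'$), and both reduce by the same diagram chase to the surjectivity of $\alpha_*\colon H_{-1}(Z',\Z(j))\to H_{-1}(Z,\Z(j))$. The difference is in how that surjectivity is obtained. The paper proceeds pointwise: it picks a closed point $x_i$ in each connected component of a desingularization $\coprod\tilde Z_i$ of $Z$, pushes it to $y_i\in Z$, uses surjectivity of $Z'\to Z$ to lift to a closed point $z_i\in Z'$, and then passes to a common finite overfield $\F$ of all residue fields to obtain a compatible family of $\F$-points feeding into a commutative diagram; the surjectivity is extracted from that diagram using Lemma~\ref{lem:subs}, Proposition~\ref{lem:smoothisom} and the inductive hypothesis. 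You instead globalize: you form the fibre product $\tilde Z\times_Z Z'$, desingularize it to get a smooth $Y$ with a proper surjective $\phi\colon Y\to\tilde Z$ and a compatible $\psi\colon Y\to Z'$, and deduce $\alpha_*$-surjectivity from $(\tilde f_Z)_*$-surjectivity (inductive hypothesis) plus $\phi_*$-surjectivity. Your justification of $\phi_*$-surjectivity via the naturality of $\alpha_X$ under pushforward, component by component, together with Proposition~\ref{lem:smoothisom} and Lemma~\ref{lem:subs}, is sound: a connected component $U$ of $Y$ lands in a single component $V$ of $\tilde Z$, the induced $\pi_0(U)\to\pi_0(V)$ is a finite field extension, and the square relating $\phi_*$ on $H_{-1}$ to pushforward on $\pi_0$ commutes by functoriality of the covariant structure of $M(-)$. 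What your version buys is the elimination of the closed-point choices and the base-change to a large field $\F$; what it costs is one further appeal to resolution of singularities (to desingularize $\tilde Z\times_Z Z'$), which is harmless given the standing hypothesis of the section. Both approaches ultimately hinge on the same three ingredients in the same way, so this is a genuine but mild variant rather than a fundamentally new argument.
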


\begin{proof}
We prove this by induction on the dimension of $X.$ Let $Z$ be a closed subscheme of $X$ which contains all singularities of $X$ and has dimension less than that of $X.$ The abstract blow-up
\begin{displaymath}
\xymatrix{Z' \ar[r]^{inc'} \ar[d]_{f'} & \tilde X \ar[d]^f \\
 Z \ar[r]_{inc} & X}
\end{displaymath}
gives rise to a long exact sequence
$$H_{-1}(Z',\Z(j))\buildrel (f'_*,inc'_*)\over\longrightarrow H_{-1}(Z,\Z(j))\oplus H_{-1}(\tilde X,\Z(j))\buildrel inc_*-f_*\over\longrightarrow H_{-1}(X,\Z(j))$$
$$\buildrel\delta\over\longrightarrow H_{-2}(Z',\Z(j))=0,$$
where $\delta$ is the connecting map. The last term vanishes by Proposition~\ref{prop:resolution}. By an easy diagram chase, in order to show the surjectivity of $f_*,$ it is enough to show the surjectivity of $f'_*.$ Let us write $Z=\bigcup Z_i,$ where $Z_i$ are the irreducible components of $Z,$ and let $\tilde Z_i$ be a desingularization of $Z_i,$ and $p:\coprod \tilde Z_i\longrightarrow \cup Z_i$ be the morphism induced by the desingularizations. Note that $p$ is then a desingularization of $Z.$ For each index $i,$ choose a closed point $x_i\in\tilde Z_i.$ Let $y_i:=p(x_i)\in Z_i\subset Z$ be the image of $x_i$ under $p.$ Since $f'$ is surjective, there is a closed point $z_i\in Z'$ with $f'(z_i)=y_i$ for each $i.$ Choose some finite field extension $\F$ of $\F_q$ which contains all residue fields $k(x_i), k(y_i)$ and $k(z_i).$ The inclusions of these residue fields into $\F$ give rise to $\F$-rational points
$$x_i:\Spec\F\longrightarrow\Spec k(x_i)\longrightarrow X,$$
$$y_i:\Spec\F\longrightarrow\Spec k(y_i)\longrightarrow X,$$
and 
$$z_i:\Spec\F\longrightarrow\Spec k(z_i)\longrightarrow X,$$
which are, with an abuse of notation, denoted by the same letters $x_i,y_i$ and $z_i.$ These points give the commutative diagram
\begin{displaymath}
\xymatrix{Z' \ar[r]^-{f'} & Z=\cup Z_i \\
 \coprod_i\Spec\F \ar[u]^-{\coprod z_i} \ar[ur]^-{\coprod y_i} \ar[r]_-{\coprod x_i} & \coprod\tilde Z_i \ar[u]_-p.}
\end{displaymath}
Taking homology groups, we obtain
\begin{displaymath}
\xymatrixcolsep{2.8pc}\xymatrix{H_{-1}(Z',\Z(j)) \ar[r]^{f'_*} & H_{-1}(Z,\Z(j)) \\
 \bigoplus H_{-1}(\Spec\F,\Z(j)) \ar@{->>}[d]_-{\text{Lem.~\ref{lem:subs}}} \ar[u]^{\oplus {z_i}_*} \ar[ur]^-{\oplus {y_i}_*} \ar[r]_-{\oplus {x_i}_*} & \bigoplus H_{-1}(\tilde Z_i,\Z(j)) \ar@{->>}[u]_-{\text {induction hypothesis}} \ar[d]_-\sim^{\text{Prop~\ref{lem:smoothisom}}}\\
 \bigoplus H_{-1}(\Spec k(x_i),\Z(j)) \ar@{->>}[r]_-{\text{Lem.~\ref{lem:subs}}} & \bigoplus H_i(\pi_0(\tilde Z_i),\Z(j)).}
\end{displaymath}
Hence, $f'_*$ is surjective.
\end{proof}

The next lemma compares the motivic homology of a given scheme with the motivic homology of one of its irreducible components.

\begin{lem}\label{lem:surj}
Let $X$ be a connected scheme over $\F_q$ and $X_1$ be an irreducible component. If $j=-1$ or $-2,$ the inclusion of $X_1$ into $X$ induces a surjection
$$H_{-1}(X_1,\Z(j))\longrightarrow H_{-1}(X,\Z(j)).$$
\end{lem}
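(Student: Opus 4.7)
The plan is to induct on the number $n$ of irreducible components of $X$, the case $n=1$ being trivial. For the inductive step I want to peel off one component at a time via an abstract blow-up that keeps $X_1$ inside the ``main'' piece. Since $X$ is connected, the dual graph of its irreducible components (vertices $=$ components, edges $=$ nonempty pairwise intersections) is connected, so I may order the components $X_1, X_2, \dots, X_n$ (keeping the designated one first) so that every partial union $X^{(k)} := X_1 \cup \cdots \cup X_k$ is connected; this is a standard spanning-tree argument.

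Setting $Z := X^{(n-1)} \cap X_n$ (nonempty, as $X$ is connected and both pieces are closed), the abstract blow-up
$$\xymatrix{ Z \ar[r] \ar[d] & X_n \ar[d] \\ X^{(n-1)} \ar[r] & X }$$
yields the long exact sequence
$$H_{-1}(Z,\Z(j)) \to H_{-1}(X^{(n-1)},\Z(j)) \oplus H_{-1}(X_n,\Z(j)) \to H_{-1}(X,\Z(j)) \to H_{-2}(Z,\Z(j)),$$
in which the last term vanishes by Proposition~\ref{prop:resolution} (the hypothesis $i-j\leq 1$ holds with $i=-2$ and $j=-1,-2$). Combined with the inductive surjection $H_{-1}(X_1,\Z(j)) \twoheadrightarrow H_{-1}(X^{(n-1)},\Z(j))$, the problem reduces to showing that the image of $H_{-1}(X_n,\Z(j))$ in $H_{-1}(X,\Z(j))$ already lies inside the image of $H_{-1}(X^{(n-1)},\Z(j))$.

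This containment is the main obstacle, and my plan is to realize every class of $H_{-1}(X_n,\Z(j))$ by a class supported on a single closed point of $Z$. Since $X_n$ is irreducible, a desingularization $\tilde X_n \to X_n$ is smooth and connected, with field of constants $k_n$. By Lemma~\ref{lem:desingisom} and Proposition~\ref{lem:smoothisom}, the composition $H_{-1}(\Spec k_n,\Z(j)) \overset{\sim}{\to} H_{-1}(\tilde X_n,\Z(j)) \twoheadrightarrow H_{-1}(X_n,\Z(j))$ is surjective. Lifting a chosen closed point $z \in Z$ to a closed point $\tilde z \in \tilde X_n$ (possible since $\tilde X_n \to X_n$ is surjective), one has $k_n \subseteq k(\tilde z) \supseteq k(z)$, and Lemma~\ref{lem:subs} gives a further surjection $H_{-1}(\Spec k(\tilde z),\Z(j)) \twoheadrightarrow H_{-1}(\Spec k_n,\Z(j))$. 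The resulting classes factor through $\Spec k(\tilde z) \to \Spec k(z) \hookrightarrow Z \hookrightarrow X^{(n-1)}$, exhibiting the desired containment and completing the induction.
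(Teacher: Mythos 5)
Your proof is correct and follows essentially the same route as the paper: induct on the number of irreducible components, peel one off via the abstract blow-up sequence (with the $H_{-2}$ term killed by Proposition~\ref{prop:resolution}), and reduce to producing classes supported on a closed point of the intersection by chaining Lemma~\ref{lem:subs}, Proposition~\ref{lem:smoothisom} and Lemma~\ref{lem:desingisom}. One small refinement worth keeping: you explicitly choose a spanning-tree ordering so that each partial union $X^{(k)}$ stays connected, a hypothesis the inductive step genuinely needs but which the paper's proof leaves implicit in its choice of $X_r$; you also work directly with $\Spec k(\tilde z)$ where the paper passes to a common finite extension $\F$ of the residue fields, a purely cosmetic difference.
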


\begin{proof}
Let us write $X=X_1\cup X_2\cup\cdots\cup X_r,$ where the $X_i$ are the irreducible components of $X.$ Since the lemma is obvious for $r=1,$ we assume that $r>1$ below. The abstract blow-up
\begin{displaymath}
\xymatrix{ Z \ar[r]^\phi \ar[d] & X_r \ar[d] \\
 \cup_{i\leq r-1}X_i \ar[r]_\psi & X}
\end{displaymath}
(all the maps in the diagram are inclusions) gives an exact sequence
$$H_{-1}(Z,\Z(j))\longrightarrow H_{-1}(X_r,\Z(j))\oplus H_{-1}(\cup_{i\leq r-1}X_i,\Z(j))\longrightarrow H_{-1}(X,\Z(j))$$
$$\longrightarrow H_{-2}(Z,\Z(j))=0,$$
where the last equality comes from Propositoin~\ref{prop:resolution}. By induction on the number of irreducible components of $X,$ it suffices to prove the surjectivity of $\psi_*:H_{-1}(\cup_{i\leq r-1}X_i,\Z(j))\longrightarrow H_{-1}(X,\Z(j)),$ which, in turn, follows from the surjectivity of $\phi_*:H_{-1}(Z,\Z(j))\longrightarrow H_{-1}(X_r,\Z(j)).$

Since $X$ is connected, $Z$ is not empty. In particular, it has a closed point, say, $z\in Z.$ Choose a desingularization $\pi:\tilde X_r\longrightarrow X_r$ and let a closed point $\tilde w\in\tilde X_r$ be a preimage of $w:=\phi(z)\in X_r.$ Choose some finite field extension $\F$ of $\F_q$ containing all the residue fields $k(z),k(w)$ and $k(\tilde w),$ and regard $z,w$ and $\tilde w$ as $\F$-rational points. Now, there is a commutative diagram
\begin{displaymath}
\xymatrix{ Z \ar[r]^\phi & X_r \\
 \Spec\F \ar[u]^z \ar[r]_{\tilde w} \ar[ur]^w & \tilde X_r \ar[u]_\pi.}
\end{displaymath} 
Passing to homology groups, we obtain (noting $\mathcal O(\pi_0(\tilde X_r))\subset k(\tilde w)\subset\F$)
\begin{displaymath}
\xymatrix{ H_{-1}(Z,\Z(j)) \ar[r]^{\phi_*} & H_{-1}(X_r,\Z(j)) \\
  H_{-1}(\Spec\F,\Z(j)) \ar@{->>}[dr]_{\text{Lem.~\ref{lem:subs}}} \ar[u]^{z_*} \ar[r]^{\tilde w_*} & H_{-1}(\tilde X_r,\Z(j)) \ar@{->>}[u]_{\text{Lem~\ref{lem:desingisom}}} \ar[d]_\sim^{\text{Prop.~\ref{lem:smoothisom}}}\\
  & H_{-1}(\pi_0(\tilde X_r),\Z(j)).}
\end{displaymath}
Hence, $\phi_*$ is surjective.
\end{proof}

\noindent{\it Proof of Theorem~\ref{thm:MAIN}}

We have already proved the first half in Proposition~\ref{prop:resolution}. It remains to prove the second half, i.e. the following statement:

\vspace{5pt}

\noindent{\it Let $X$ be an arbitrary scheme over $\F_q$ and $j=-1,-2.$ Then, the canonical map $\alpha_X: H_{-1}(X,\Z(j))\longrightarrow H_{-1}(\pi_0(X),\Z(j))$ is an isomorphism.}

\vspace{5pt}

If $\dim X=0,$ then $\alpha_X$ is clearly an isomorphism (because we may assume $X$ to be a disjoint union of reduced schemes, i.e. a union of  spectra of finite fields).

We prove the theorem by induction on the dimension of $X.$ Assume that the theorem holds for schemes of dimension at most $d-1.$ We prove the assertion for a $d$-dimensional scheme $X.$ By Lemma~\ref{lem:independence}, we may assume without loss of generality that $X$ is geometrically connected and reduced. Choose a nonempty closed subscheme $Z$ of $X$ such that $X\setminus Z$ is smooth and $\dim Z<\dim X.$ First, we claim that the inclusion $Z\hookrightarrow X$ induces a surjection
$$\beta:H_{-1}(Z,\Z(j))\longrightarrow H_{-1}(X,\Z(j)).$$
Indeed, there is some irreducible component, call it $X_1,$ of $X$ such that $Z\cap X_1\neq\emptyset.$ Let $\tilde X_1$ be a desingularization of $X_1.$ Choose a closed point $x\in Z\cap X_1$ and its preimage $\tilde x\in \tilde X_1.$ Let $\F$ be a sufficiently large finite field that contains both residue fields $k(x)$ and $k(\tilde x).$ (Note that $k(\tilde x)$ contains $\pi_0(\tilde X_1).$) Then, there is a commutative diagram
\begin{displaymath}
\xymatrix{  & H_{-1}(\pi_0(\tilde X_1),\Z(j)) \\
  & H_{-1}(\tilde X_1,\Z(j)) \ar[u]^\sim_{\text{Prop.~\ref{lem:smoothisom}}} \ar@{->>}[d]^{\text{Lem.~\ref{lem:desingisom}}} \\
 H_{-1}(\Spec\F,\Z(j)) \ar[ur]^{\tilde x_*} \ar@{->>}[uur]^{\text{Lem.~\ref{lem:subs}}} \ar[d]_{x_*} \ar[r]^{x_*} & H_{-1}(X_1,\Z(j)) \ar@{->>}[d]^{\text{Lem.~\ref{lem:surj}}}\\
 H_{-1}(Z,\Z(j)) \ar[r]_\beta & H_{-1}(X,\Z(j)).}
\end{displaymath}
The commutativity of the diagram implies the surjectivity of $\beta.$

Next, consider the commutative diagram 
\begin{displaymath}
\xymatrix{ H_{-1}(Z,\Z(j)) \ar@{->>}[r]^\beta \ar[d]^{\sim\text{, ind. hypothesis}}_{\alpha_Z} & H_{-1}(X,\Z(j)) \ar[d]^{\alpha_X} \\
 H_{-1}(\pi_0(Z),\Z(j)) \ar@{->>}[r]_{\text{Lem.~\ref{lem:subs}}}^\gamma & H_{-1}(\pi_0(X),\Z(j)).}
\end{displaymath}
In order to show that $\alpha_X$ is an isomorphism, it is enough to show its injectivity for the surjectivity is obvious from the diagram. The injectivity follows once one constructs a group homomorphism
$$l:H_{-1}(\pi_0(X),\Z(j))\longrightarrow H_{-1}(X,\Z(j))$$
such that $l\circ\gamma\circ\alpha_Z=\beta$ because then, the surjectivity of $\beta$ and the equalities $l\circ\alpha_X\circ\beta=l\circ\gamma\circ\alpha_Z=\beta$ imply that $l\circ\alpha_X=id.$ (In fact, $l$ is the inverse to $\alpha_X$ because we also have $\alpha_X\circ l\circ\gamma=\alpha_X\circ\beta\circ\alpha_Z^{-1}=\gamma$ and the surjectivity of $\gamma$ implies that $\alpha_X\circ l=id.$)

The existence of such a map $l$ follows if one shows that the square in the following diagram is cocartesian.
\begin{displaymath}
\xymatrix{ H_{-1}(\pi_0(Y),\Z(j)) \ar[r] \ar[d] & H_{-1}(\pi_0(\tilde X),\Z(j)) \ar[d]^{p_*} \ar@/^/[ddr]^{p_*\circ\alpha_{\tilde X}^{-1}}\\
 H_{-1}(\pi_0(Z),\Z(j)) \ar@/_/[rrd]_{\beta\circ\alpha_Z^{-1}} \ar[r]_\gamma & H_{-1}(\pi_0(X),\Z(j)) \ar@{.>}[dr]^{\exists^! l}\\
  & & H_{-1}(X,\Z(j)). }
\end{displaymath}
Here, $p:\tilde X\longrightarrow X$ is a desingularization of $X,$ $Y:=(\tilde X\times_X Z)_{red}$ and $\gamma$ is the map induced by the canonical morphism $\pi_0(Z)\longrightarrow \pi_0(X).$ Indeed, the map $l$ defined by universality in the above diagram satisfies $l\circ\gamma\circ\alpha_Z=\beta$ by its definition. Note that $\alpha_{\tilde X}^{-1}$ makes sense because $\alpha_{\tilde X}$ is an isomorphism by Proposition~\ref{lem:smoothisom}, and so does $\alpha_Z^{-1}$ by the induction hypothesis.

Since for a zero dimensional $\F_q$-scheme $S$ and $j\leq-1$ there is an isomorphism
$$H_{-1}(S,\Z(j))\cong\bigoplus_{l\neq p} H_{\acute et}^0(S,\Q_l/\Z_l(-j))$$
that is functorial with respect to pushforward along finite morphisms \cite[Lemma 3.5 (2)]{GLblochkato} induced by the Geisser-Levine cycle map, it is enough to show that the diagram
\begin{displaymath}(*)
\xymatrix{ H_{\acute et}^0(\pi_0(Y),\Q_l/\Z_l(-j)) \ar[r] \ar[d] & H_{\acute et}^0(\pi_0(\tilde X),\Q_l/\Z_l(-j)) \ar[d]\\
 H_{\acute et}^0(\pi_0(Z),\Q_l/\Z_l(-j)) \ar[r] & H_{\acute et}^0(\pi_0(X),\Q_l/\Z_l(-j))}
\end{displaymath}
is cocartesian for all primes $l\neq p.$ (Here, the arrows are pushforward maps along finite morphisms.) 

Now, consider the diagram
\begin{displaymath}(**)
\xymatrix{ H_{\acute et}^0(\pi_0(\bar Y),\Q_l/\Z_l(-j)) \ar[r]^a \ar[d]_b & H_{\acute et}^0(\pi_0(\bar {\tilde X}),\Q_l/\Z_l(-j)) \ar[d]^c\\
 H_{\acute et}^0(\pi_0(\bar Z),\Q_l/\Z_l(-j)) \ar[r]_d & H_{\acute et}^0(\pi_0(\bar X),\Q_l/\Z_l(-j)),}
\end{displaymath}
where $\bar~$ indicates the base change to the algebraic closure $\bar\F_q,$ for example, $\bar X=X\otimes_{\F_q}\bar\F_q.$

Let us for the moment assume that the diagram (**) is cocartesian in the category of $G(\bar\F_q/\F_q)$-modules and that the module $N:=ker\{H_{\acute et}^0(\pi_0(\bar Y),\Q_l/\Z_l(-j)) \buildrel (a,b)\over\to H_{\acute et}^0(\pi_0(\bar{\tilde X}),\Q_l/\Z_l(-j))\oplus H_{\acute et}^0(\pi_0(\bar Z),\Q_l/\Z_l(-j))\}$ is divisible. We shall show that the diagram (*) is cocartesian under these assumptions. Since the diagram (**) is a pushout, there is an exact sequence of $G(\bar \F_q/\F_q)$-modules
$$0\to H_{\acute et}^0(\pi_0(\bar Y),\Q_l/\Z_l(-j))/N \buildrel (a,b)\over\to H_{\acute et}^0(\pi_0(\bar{\tilde X}),\Q_l/\Z_l(-j))\oplus H_{\acute et}^0(\pi_0(\bar Z),\Q_l/\Z_l(-j))$$ 
$$\buildrel c-d\over\to H_{\acute et}^0(\pi_0(\bar X),\Q_l/\Z_l(-j))\to0,$$
where $(a,b)$ is, of course, the quotient map induced by the map $(a,b)$ defined on $H_{\acute et}^0(\pi_0(\bar Y),\Q_l/\Z_l(-j)).$ Taking Galois cohomology of $G(\bar\F_q/\F_q)$-modules, we obtain the long exact sequence
$$0\to (H_{\acute et}^0(\pi_0(\bar Y),\Q_l/\Z_l(-j))/N)^{G(\bar\F_q/\F_q)}$$
$$ \to H_{\acute et}^0(\pi_0(\tilde X),\Q_l/\Z_l(-j))\oplus H_{\acute et}^0(\pi_0(Z),\Q_l/\Z_l(-j))\to H_{\acute et}^0(\pi_0(X),\Q_l/\Z_l(-j))$$
$$\to H^1(G(\bar\F_q/\F_q), H_{\acute et}^0(\pi_0(\bar Y),\Q_l/\Z_l(-j))/N)\to\cdots.$$
Since $j\neq0,$ the Frobenius automorphism acts nontrivially on the divisible group $H_{\acute et}^0(\pi_0(\bar Y),\Q_l/\Z_l(-j))/N,$ which is just a direct sum of copies of the divisible group $\Q_l/\Z_l(-j)/N.$ By the same reasoning as in the last part of the proof of Lemma~\ref{rem:calculation}, we conclude that 
$$H^1(G(\bar\F_q/\F_q), H_{\acute et}^0(\pi_0(\bar Y),\Q_l/\Z_l(-j))/N)=0.$$

Similarly, the short exact sequence
$$0\to N\to H_{\acute et}^0(\pi_0(\bar Y),\Q_l/\Z_l(-j))\to  H_{\acute et}^0(\pi_0(\bar Y),\Q_l/\Z_l(-j))/N\to0$$ gives rise to a long exact sequence in Galois cohomology
$$0\to N^{G(\bar\F_q/\F_q)}\to H_{\acute et}^0(\pi_0(Y),\Q_l/\Z_l(-j))$$
$$\to  (H_{\acute et}^0(\pi_0(\bar Y),\Q_l/\Z_l(-j))/N)^{G(\bar\F_q/\F_q)}\to H^1(G(\bar\F_q/\F_q), N)=0.$$
The last term vanishes because $N$ is assumed divisible and the Galois action is nontrivial if $N\neq0.$ (Since $N$ must have infinite cardinality, a trivial Galois action would imply that $N'=N^{G(\bar\F_q/\F_q)}$ is infinite, but this would contradict the fact that $N'$ is a subgroup of the finite group $H_{\acute et}^0(\pi_0(\bar Y),\Q_l/\Z_l(-j))^{G(\bar\F_q/\F_q)}.$) In particular, the map $$H_{\acute et}^0(\pi_0(Y),\Q_l/\Z_l(-j))\to  (H_{\acute et}^0(\pi_0(\bar Y),\Q_l/\Z_l(-j))/N)^{G(\bar\F_q/\F_q)}$$
is surjective.

Combining all these, we obtain an exact sequence
$$H_{\acute et}^0(\pi_0(Y),\Q_l/\Z_l(-j)) \to H_{\acute et}^0(\pi_0(\tilde X),\Q_l/\Z_l(-j))\oplus H_{\acute et}^0(\pi_0(Z),\Q_l/\Z_l(-j))$$ 
$$\to H_{\acute et}^0(\pi_0(X),\Q_l/\Z_l(-j))\to0.$$
This means that the diagram (*) is cocartesian.

It now remains to prove that the diagram (**) is cocartesian and $N$ is a divisible group. Using the Pontryagin duality
$$H_{\acute et}^0(T,\Z_l(j))\cong \mathrm{Hom}_\Z(H_{\acute et}^0(T,\Q_l/\Z_l(-j)),\Q/\Z),$$
for a zero dimensional scheme $T$ over $\bar\F_q$, obtained by taking the inverse limit over $r$ of the duality
$$H_{\acute et}^0(T,\Z/l^r(j))\cong \mathrm{Hom}_\Z(H_{\acute et}^0(T,\Z/l^r(-j)),\Q/\Z),$$
we can see that it suffices to prove that the diagram with pullback homomorphisms
\begin{displaymath}
\xymatrix{ H_{\acute et}^0(\pi_0(\bar Y),\Z_l(j))  & H_{\acute et}^0(\pi_0(\bar{\tilde X}),\Z_l(j)) \ar[l]_{a'}\\
 H_{\acute et}^0(\pi_0(\bar Z),\Z_l(j)) \ar[u]^{b'} & H_{\acute et}^0(\pi_0(\bar X),\Z_l(j)) \ar[u] \ar[l]}
\end{displaymath}
is cartesian and the cokernel of
$$H_{\acute et}^0(\pi_0(\bar{\tilde X}),\Z_l(j)) \oplus H_{\acute et}^0(\pi_0(\bar Z),\Z_l(j))\buildrel a'+b'\over\longrightarrow H_{\acute et}^0(\pi_0(\bar Y),\Z_l(j))$$
is torsion free. But, since there are canonical isomorphisms 
$$H_{\acute et}^0(\bar X,\Z_l(j))\cong \mathrm{Hom}_{Set}(\pi_0(\bar X),\Z_l)\otimes_{\Z_l}\Z_l(j)$$
of $G(\bar\F_q/\F_q)$-modules (Observe that these groups are just direct sums of $\Z_l(j)$ with one summand for each connected component of $\bar X$), it boils down to showing that the diagram
\begin{displaymath}
\xymatrix{ \pi_0(\bar Y) \ar[r]^\phi \ar[d]_\psi & \pi_0(\bar{\tilde X}) \ar[d]\\
 \pi_0(\bar Z) \ar[r] & \pi_0(\bar X),}
\end{displaymath}
where $\phi$ and $\psi$ are the canonical maps is cocartesian in the category of sets and the cokernel of the map
$$\mathrm{Hom}_{Set}(\pi_0(\bar{\tilde X}),\Z_l)\oplus\mathrm{Hom}_{Set}(\pi_0(\bar Z),\Z_l)\longrightarrow\mathrm{Hom}_{Set}(\pi_0(\bar Y),\Z_l)$$ sending $(f,g)$ to $f\circ\phi+g\circ\psi$ is torsion free. The claim about the cokernel is straightfoward. (The proof can be found in \cite[Lemma 3.3]{KY}.)

Let us prove the assertion on the square diagram. Because the map $\psi$ is surjective and $\pi_0(\bar X)$ consists of one element as we are working with a geometrically connected scheme $X$, it suffices to show that any two elements $x_1$ and $x_2$ in $\pi_0(\bar{\tilde X})$ are related by the equivalence relation generated by the relation $\sim$ on $\pi_0(\bar{\tilde X})$ defined by $s\sim s'$ if there are $t,t'\in\pi_0(\bar Y)$ such that $\phi(t)=s,$ $\phi(t')=s'$ and $\psi(t)=\psi(t').$ In order to prove this, we may assume that $x_1$ and $x_2$ in $\pi_0(\bar{\tilde X})$ correspond to irreducible components $C_1$ and $C_2$ of $\bar X$ with non-empty intersection $C_1\cap C_2.$ (If $x_1$ and $x_2$ correspond to irreducible elements $C_1$ and $C_2$ with empty intersection, choose a sequence of elements $x_1=s_1, s_2,\cdots, s_{r-1}, s_r=x_2\in\pi_0(\bar{\tilde X})$ such that their corresponding irreducible components $C_1=S_1,S_2,\cdots,S_{r-1},S_r=C_2$ of $\bar X$ have the property that $S_i\cap S_{i+1}\neq\emptyset$ for $i=1,\cdots,r-1.$ Then, apply the above case successively to $s_i$ and $s_{i+1}.$) Now, since $C_1$ and $C_2$ intersect, choose $y\in C_1\cap C_2.$ Clearly, $\bar X$ is not smooth at $y.$ Since $\bar X\setminus \bar Z$ is smooth, it follows that $y\in\bar Z.$ Choose $y_1,y_2\in\bar{\tilde X}$ lying above $y$ such that $y_1$ belongs to $C_1$ and $y_2$ to $C_2.$ By definition of $Y,$ $y_1$ and $y_2$ belong to $\bar Y$ and the connected components to which they belong are denoted by the same letters. We then have $\phi(y_1)=x_1, \phi(y_2)=x_2$ and $\psi(y_1)=y=\psi(y_2).$ This proves the theorem.

% BIBLIOGRAPHY

\end{document}